\DeclareMathOperator{\vect}{vec}
\theoremstyle{plain}
\newtheorem{theorem}{Theorem}[section]
\newtheorem{lemma}{Lemma}[section]
  \newtheorem{assumption}{Assumption}[section]
\theoremstyle{definition}
\newtheorem{definition}{Definition}[section]
\DeclareMathOperator*{\argmin}{arg\,min}
\newcommand{\bw}{\mathbf{w}}
\newcommand{\bz}{\mathbf{z}}
\newcommand{\bzeros}{\mathbf{0}}
\newcommand{\be}{\mathbf{e}}
\newcommand{\bA}{\mathbf{A}}
\newcommand{\bB}{\mathbf{B}}
\newcommand{\bY}{\mathbf{Y}}
\newcommand{\bG}{\mathbf{G}}
\newcommand{\bE}{\mathbf{E}}
\newcommand{\bF}{\mathbf{F}}
\newcommand{\bS}{\mathbf{S}}
\newcommand{\bT}{\mathbf{T}}
\newcommand{\bZ}{\mathbf{Z}}
\newcommand{\bM}{\mathbf{M}}
\newcommand{\bN}{\mathbf{N}}
\newcommand{\bC}{\mathbf{C}}
\newcommand{\bQ}{\mathbf{Q}}
\newcommand{\bX}{\mathbf{X}}
\newcommand{\bI}{\mathbf{I}}
\newcommand{\bJ}{\mathbf{J}}
\newcommand{\bK}{\mathbf{K}}
\newcommand{\bH}{\mathbf{H}}
\newcommand{\bU}{\mathbf{U}}
\newcommand{\bV}{\mathbf{V}}
\newcommand{\bW}{\mathbf{W}}
\newcommand{\bP}{\mathbf{P}}
\newcommand{\bLambda}{\boldsymbol{\Lambda}}
\newcommand{\bGamma}{\mathbf{\Gamma}}
\newcommand{\bUpsilon}{\mathbf{\Upsilon}}
\newcommand{\bPhi}{\mathbf{\Phi}}
\newcommand{\bPsi}{\mathbf{\Psi}}
\newcommand{\bOmega}{\mathbf{\Omega}}
\newcommand{\bLagrange}{\boldsymbol{\cal L}}
\newcommand{\nul}{\mathcal{N}} % Magnus 2025-09-19, not standard!
\newcommand{\R}{{\mathbb R}}
\newcommand{\twopartdef}[4]
\newcommand{\PROJ}{\boldsymbol{\mathcal{P}}}
\title{Linear Algebra Problems Solved by Using Damped Dynamical Systems on the Stiefel Manifold\\

\author{ Mårten Gulliksson$^{(a)}$\thanks{marten.gulliksson@oru.se}, Anna Oleynik$^{(b)}$\thanks{anna.oleynik@uib.no}, Magnus Ögren$^{(a)}$\thanks{magnus.ogren@oru.se}, Rohollah Bakhshandeh-Chamazkoti$^{(c)}$\thanks{r\_bakhshandeh@nit.ac.ir}
\\
$^{(a)}$School of Science and Technology, Örebro University, 70182 Örebro, Sweden
\\
$^{(b)}$Department of Mathematics, Bergen University, Bergen, Norway
\\
$^{(c)}$Faculty of Basic Sciences, Babol Noshirvani University of Technology, Babol, Iran.
\\
}
}
\begin{document}

\maketitle
%%%%%%%%%%%%%%%%%%%%%%%%%%%%%%%%%%%%%%%%%%%%%%%%%%%%%%%%
\section*{Abstract}
We develop a new method for solving minimization problems on the Stiefel Manifold using damped dynamical systems. The constraints are satisfied in the limit by an additional damped dynamical system. The method is illustrated by numerical experiments and compared to a state-of-the-art conjugate gradient method.\\
\textbf{Keywords}:
Optimization; Constraints; Second-order damped dynamical systems; Stiefel manifold.

\section{Introduction}

We describe a method for finding the minimum of a non-linear matrix function on the Stiefel manifold, i.e., the set of orthonormal matrices. The approach is based on the introduction of a second-order damped dynamical system generating a trajectory whose limit point is the solution of the minimization problem. 

We consider two different formulations of such dynamical systems: 1. Using the Lagrange function not requiring the trajectory to stay on the manifold. 2. Staying on the manifold with the use of projection.  When using the Lagrange function we define an additional damped system for the constraints that is then used to get the Lagrange parameters and consequently attaining a dynamical system only in the unknown matrix.

Throughout the paper, we use the linear eigenvalue and Procrustes problems as illustrations of the approach. 

\section{Optimizing on the Stiefel manifold}
\label{sec:appl}

Assume the function $F: \mathbb{R}^{n \times p} \longrightarrow \mathbb{R}$ to be twice continuously differentiable, and denote 
$$
\frac{\partial F}{\partial \bX}(\bX)=\bG(\bX).
$$
The minimization problem under consideration is
\begin{equation}\label{eqas}
\min_{\bX\in {\rm St}(n,p)}
F(\bX),
\end{equation}
where ${\rm St}(n,p)$ is the Stiefel manifold, 
$${\rm St}(n,p):= \{ \bX \in \R^{n \times p} ~|~ \bX^{\top} \bX = {\rm I}_p \}.$$
The Stiefel manifold can be embedded in the $np$-dimensional Euclidean space of $n\times p$ matrices. The tangent space of ${\rm St}(n,p)$ at a point $\bX$ can be defined as 
\begin{equation*}
\begin{aligned}
     T_\bX {\rm St}(n,p) &= \{ \bZ\in\R ^{n\times p} ~|~ \bX^{\top} \bZ + \bZ^{\top} \bX = 0 \}.
     %,\\
     %&= \{ \bZ\in \R^{n\times p} ~|~ \bZ=AY, \; \mbox{for any } A\in \R^{n \times n} \mbox{ such that }  A^{\top}=-A \}.
\end{aligned}
\end{equation*}
The normal space is defined to be the orthogonal complement of the tangent
space. Orthogonality depends upon the definition of an inner product, and because we view the Stiefel manifold as an embedded manifold in Euclidean
space, we choose the standard inner product
\begin{equation}\label{canonicmetr}
\langle \bZ_1, \bZ_2\rangle = {\rm tr}\left(\bZ_1^{\top} \bZ_2\right),
\end{equation}
and $\| \bZ\| = \sqrt{\langle \bZ, \bZ\rangle}$. 
An orthogonal projection of a matrix $\bG$ onto the tangent space at a point $\bX$ can be written as
\begin{equation}
\label{GenProjGrad}
\mathcal{P}_{\top}(\bG) =  (\bI - \bX \bX^{\top})\bG +  \dfrac{1}{2}\bX(\bX^{\top} \bG - \bG^{\top}\bX)  = (\bI - \bX \bX^{\top})\bG +  \bX\text{skew}(\bX^{\top} \bG). 
\end{equation}
Therefore, a first-order necessary condition for optimum is
\begin{equation}
\label{FirstProjCond}
(\bI - \bX \bX^{\top})\bG +  \bX\text{skew}(\bX^{\top} \bG)=\bzeros, \ \bC(\bX)=\bzeros. 
\end{equation}
We have the following useful properties
\begin{lemma}
\label{ProjPropLemma}
Let $\bX\in\R ^{n\times p}$ be orthonormal and $\bG\in\R^{n\times p}$ where
\[
(\bI - \bX \bX^\top)\bG + \frac{1}{2}\bX(\bX^\top \bG - \bG^\top \bX) = 0,
\]
then $(\bI - \bX \bX^\top)\bG = 0$ and $\bX^\top \bG - \bG^\top \bX = 0$.
\end{lemma}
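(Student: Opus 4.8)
The plan is to exploit the fact that the two summands of the hypothesis lie in complementary subspaces of $\R^{n\times p}$. The first term $(\bI - \bX\bX^\top)\bG$ has all its columns orthogonal to the column span of $\bX$, since $\bX^\top(\bI - \bX\bX^\top) = \bX^\top - (\bX^\top\bX)\bX^\top = 0$ by orthonormality. The second term $\frac12\bX(\bX^\top\bG - \bG^\top\bX) = \bX\,\text{skew}(\bX^\top\bG)$ has all its columns in the span of $\bX$. Hence, if their sum vanishes, each must vanish separately; the whole proof is just making this orthogonal-decomposition argument precise.

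Concretely, the first step is to left-multiply the hypothesis by $\bX^\top$ and use $\bX^\top\bX = \bI_p$. The cross term drops out because $\bX^\top(\bI - \bX\bX^\top) = 0$, and the second summand simplifies via $\bX^\top\bX = \bI_p$, leaving $\tfrac12(\bX^\top\bG - \bG^\top\bX) = 0$. This is exactly the skew-symmetric part, so the second assertion $\bX^\top\bG - \bG^\top\bX = 0$ follows at once. The second step is to feed this identity back into the hypothesis: the term $\tfrac12\bX(\bX^\top\bG - \bG^\top\bX)$ is now zero, so the hypothesis collapses to $(\bI - \bX\bX^\top)\bG = 0$, which is the first assertion.

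I do not anticipate a genuine obstacle here, as the computation is short. The only point requiring a little care is keeping straight which orthonormality identity is in play at each step: $\bX^\top\bX = \bI_p$ is used to simplify the second summand, whereas $\bX\bX^\top$ is merely a rank-$p$ orthogonal projector (not the identity, since $p$ may be strictly less than $n$), and it is precisely the relation $\bX^\top(\bI - \bX\bX^\top) = 0$ that annihilates the first summand. Keeping the argument at this elementary matrix-identity level, rather than invoking an abstract direct-sum decomposition of $\R^{n\times p}$, seems the cleanest route.
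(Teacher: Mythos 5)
Your proof is correct and uses essentially the same orthogonal-decomposition argument as the paper, only in reverse order: the paper first multiplies the hypothesis by $\bP_\perp = \bI - \bX\bX^\top$ to kill the $\bX$-range term and obtain $(\bI - \bX\bX^\top)\bG = 0$, then extracts the skew part, whereas you first multiply by $\bX^\top$ to get $\bX^\top\bG - \bG^\top\bX = 0$ and then recover the other identity. Both steps rest on the same identities $\bX^\top\bX = \bI_p$ and $\bX^\top(\bI - \bX\bX^\top) = 0$, so no further changes are needed.
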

\begin{proof}
Let $\bP = \bX \bX^\top$ (projection onto $\mathrm{col}(\bX)$) and $\bP_\perp = \bI - \bX \bX^\top$ (projection onto $\mathrm{col}(\bX)^\perp$) then we have
\[
\bP_\perp \bG + \frac{1}{2}\bX(\bX^\top \bG - \bG^\top \bX) = 0.
\]
Multiply through by $\bP_\perp$ we get
\[
\bP_\perp \bP_\perp \bG + \frac{1}{2}\bP_\perp \bX(\bX^\top \bG - \bG^\top \bX) = 0.
\]
Since $\bP_\perp \bX = 0$ (because $\bX$ is in the column space), this simplifies to
\[
\bP_\perp \bG = 0.
\]
With $\bP_\perp \bG = 0$ we get
\[
\frac{1}{2}\bX(\bX^\top \bG - \bG^\top \bX) = 0
\]
giving
\[
\bX^\top \bX (\bX^\top \bG - \bG^\top \bX) = 0
\]
and since ($\bX^\top \bX = \bI_p$) we get
\[
\bX^\top \bG - \bG^\top \bX = 0.
\]
\end{proof}
 \begin{equation}
\label{eq:constrains}
 \bC=\dfrac{1}{2}\left( \bX^{\top}\bX-\bI \right),
 \end{equation}
we can write the Lagrange function as
$$
\bLagrange ( \bX,\bM) = F(\bX) +  \langle \bC, \bM\rangle,
$$
where $\bM$ is a symmetric matrix with the Lagrange parameters. We obtain
$$
\frac{\partial \bLagrange}{\partial \bX} ( \bX,\bM) = \bG ( \bX)  +  \bX \bM
$$
and the necessary conditions, first-order KKT-conditions, for a minimum are 
\begin{equation}
\label{KKT}
 \bG (\bX)  +  \bX \bM = 0, \ \bC (\bX) = \bzeros.
\end{equation}
A point satisfying these conditions is denoted by $\hat{\bX}, \hat{\bM}$.

We can also attain the condition $(\bI - \bX \bX^{\top})\bG (\bX)   = 0$  considering the  Taylor expansions of $F(\bX)$ and $\bC (\bX)$, i.e.,
$$
F(\bX + \Delta \bX) = F(\bX) + \langle \bG (\bX), \Delta \bX \rangle + \mathcal{O}(\| \Delta \bX \|^2), \
\bC (\bX + \Delta \bX) = \bC (\bX) +  \Delta \bX^{\top}\bX + \bX \Delta \bX^{\top} + \mathcal{O}(\| \Delta \bX \|^2).
$$
The corresponding minimization problem to (\ref{eqas})   is then
\begin{equation*}
\begin{aligned}
     \min_{\Delta \bX} F(\bX) + \langle \bG (\bX), \Delta \bX \rangle + \mathcal{O}(\| \Delta \bX \|^2) \\
    \text{s.t.} \ \Delta \bX^{\top}\bX + \bX \Delta \bX^{\top} + \mathcal{O}(\| \Delta \bX \|^2) = \bzeros.
\end{aligned}
\end{equation*}
With
$$
 \Delta \bX =  (\bI - \bX \bX^{\top}) \Delta \bZ
$$
the constraints are satisfied up to first-order and thus  (\ref{eqas}) is equivalent to
$$
     \min_{\Delta \bZ} F(\bX) + \langle\bG (\bX), (\bI - \bX \bX^{\top})\Delta \bZ \rangle +  \mathcal{O}(\| \Delta \bZ \|^2)
$$
that gives the  first-order necessary optimality condition
\begin{equation}
\label{KKTproj}
(\bI - \bX \bX^{\top})\bG (\bX)   = 0, \ \bC (\bX) = \bzeros.
\end{equation}

As for the second-order optimality conditions we do not consider them in a general form only for our two different examples, the linear eigenvalue problem and the Procrustes problem. 

\section{Damped Dynamical Systems}

\subsection{Lagrange Function Formulation}

Let $\bX = \bX(t)$ and $\bM = \bM (t)$ and consider the damped dynamical system
\begin{equation}
\label{GenDyn}
\ddot{\bX} + \eta \dot{\bX} =    - \bG -   \bX \bM
\end{equation}
where $\eta >0$ is the damping parameter. We note that any stationary point to (\ref{GenDyn}) satisfies the first-order KKT-condition. 
The constraints are satisfied at a stationary point by adding another damped dynamical system in $\bC = \bC(t)$
\begin{equation}
\label{ConDyn}
\ddot{\bC} + \eta {\dot\bC} + \bK \odot \bC = \bzeros,
\end{equation}
where $\odot$ is the Hadamard (elementwise) product and $\bK$ is a matrix with positive elements. It is easy to see that the stationary solution of  (\ref{ConDyn}) is $\bC = \bzeros$.

Using \eqref{eq:constrains}, we obtain
\begin{equation}\label{eq:dC}
\dot{\bC} =\frac{1}{2}\dot{\bX}^{\top}\, \bX + \frac{1}{2}\bX^{\top}\dot{\bX} \mbox{ and }
\ddot{\bC} =\frac{1}{2}\ddot{\bX}^{\top}\, \bX+\dot{\bX}^{\top}\, \dot{\bX}+\frac{1}{2}\bX^{\top}\ddot{\bX}.
\end{equation}
Substituting the equalities above in  \eqref{ConDyn} yields
\begin{equation}
 \bM^{\top} \bX^{\top} \bX+ \bX^{\top} \bX \bM = \bK\odot ( \bX^{\top} \bX- \bI) + 2\dot{\bX}^{\top}\dot{\bX} - \bG^{\top} \bX -\bX^{\top} \bG.
\end{equation}
Next, we use  (\ref{GenDyn}) and get
\begin{equation}
-(\bG + \bX \bM)^{\top}\, \bX +2\dot{\bX}^{\top}\dot{\bX} -  \bX^{\top}(\bG + \bX \bM) +\bK\odot  (\bX^{\top} \bX-\bI)=0
\end{equation}
or
\begin{equation}
\label{eq:aux:1}
 \bM^{\top} \bX^{\top} \bX+ \bX^{\top} \bX \bM = \bK\odot ( \bX^{\top} \bX- \bI) + 2\dot{\bX}^{\top}\dot{\bX} - \bG^{\top} \bX -\bX^{\top} \bG.
\end{equation}
We introduce
\begin{equation}
\label{eq:T}    
 \bT = \bK\odot ( \bX^{\top} \bX- \bI)- \bG^{\top} \bX +2\dot{\bX}^{\top}\dot{\bX}- \bX^{\top} \bG,
\end{equation}
and rewrite \eqref{eq:aux:1} as
\begin{equation*} 
 \bM^{\top} \bX^{\top}\bX+ \bX^{\top} \bX \bM= \bT
\end{equation*}
or since the right-hand side is symmetric 
\begin{equation} 
\label{Syl}
 \bX^{\top} \bX \bM+ \bM \bX^{\top} \bX= \bT
\end{equation}
which is the symmetric Sylvester equation. 
Note that the Lagrange parameters can be found with a Sylvester equation like (\ref{Syl}) for any $F$ since only  $\bT$ will differ (through different $\bG$).
In conclusion, we have the dynamical system
\begin{eqnarray}
\ddot{\bX} + \eta \dot{\bX} = -\bG - \bX \bM,\label{GeneralDamp1}\\
\bX^\top \bX \bM + \bM \bX^\top \bX = \bT, \label{GeneralDamp2}\\
\bT = \mathbf{K} \odot (\bX^\top \bX - \mathbf{I}) - \mathbf{G}^\top \bX + 2 \dot{\bX}^\top \dot{\bX} - \bX^\top \mathbf{G}.\label{GeneralDamp3}
\end{eqnarray}

Let us now assume that the damping in (\ref{GenDyn}) and (\ref{ConDyn}) is not the same, i.e.,
\begin{equation}
\label{GenDyn2}
\ddot{\bX} + \eta_{\bX}\dot{\bX} =    - \bG -   \bX \bM
\end{equation}
and
\begin{equation}
\label{ConDyn2}
\ddot{\bC} + \eta_{\bC} {\dot\bC} + \bK \odot \bC = \bzeros,
\end{equation}
where $\eta_{\bX}>0, \ \eta_{\bC}>0$.

We can use (\ref{ConDyn2}) to find the Lagrange parameters. Consider again
$$
\dot{\bC} = \frac{1}{2} \dot{\bX}^{\top}\, \bX + \frac{1}{2} \bX^{\top}\dot{\bX},\
\ddot{\bC} = \frac{1}{2} \ddot{\bX}^{\top}\, \bX+ \dot{\bX}^{\top}\, \dot{\bX}+ \frac{1}{2} \bX^{\top}\ddot{\bX}
$$
giving
\begin{equation}
\ddot{\bX}^{\top}\, \bX+2\dot{\bX}^{\top}\, \dot{\bX}+\bX^{\top}\ddot{\bX} + \eta_{\bC} (\dot{\bX}^{\top}\, \bX +\bX^{\top}\dot{\bX}) +\bK\odot  (\bX^{\top} \bX-\bI) =0.
\end{equation}
Using  (\ref{GenDyn2}) we get  $\ddot{\bX} = -\eta_{\bX} \dot{\bX}     - \bG -   \bX \bM$ and
\begin{equation}
(-\eta_{\bX} \dot{\bX}     - \bG -   \bX \bM)^{\top}\, \bX +2\dot{\bX}^{\top}\dot{\bX}  + \bX^{\top}(-\eta_{\bX} \dot{\bX}     - \bG -   \bX \bM) + \eta_{\bC} (\dot{\bX}^{\top}\, \bX +\bX^{\top}\dot{\bX}) +\bK\odot  (\bX^{\top} \bX-\bI)=0
\end{equation}
or
\begin{equation}
 \bM^{\top} \bX^{\top} \bX+ \bX^{\top} \bX \bM =  \bT
\end{equation}
where
$$
 \bT = \bK\odot ( \bX^{\top} \bX- \bI)- \bG^{\top} \bX - \bX^{\top} \bG  +2\dot{\bX}^{\top}\dot{\bX} +  (\eta_{\bC}-\eta_{\bX}) (\dot{\bX}^{\top}\, \bX +\bX^{\top}\dot{\bX}).
$$
We see that the only difference compared to the case of equal damping is an additional term in $\bT$. Obviously, following the same idea we could introduce two positive definite damping matrices, say $\bGamma, \bUpsilon$, giving the additional term in $\bT$ on the form 
$(\bGamma-\bUpsilon) (\dot{\bX}^{\top}\, \bX +\bX^{\top}\dot{\bX})$.

\subsection{Projected Gradient Formulation}

Here, we consider (\ref{FirstProjCond}) and define our dynamical system as
\begin{equation}
\label{GenDynProj}
\ddot{\bX} + \eta \dot{\bX} =    -\mathcal{P}_{\top}(\bG), \  \bC (\bX) = \bzeros
\end{equation}
and
\begin{equation}
\label{DynProj}
\ddot{\bX} + \eta \dot{\bX} =    -(\bI - \bX \bX^{\top})\bG, \  \bC (\bX) = \bzeros.
\end{equation}
Furthermore, we note that at a critical point  on the Stiefel manifold where  we have
$$
\bX \bM =  -\dfrac{1}{2}(\bX \bG^{\top} \bX + \bX \bX^{\top} \bG) 
$$
so  $\bX \bM = -\bX \bX^{\top} \bG$ showing that, in this case,  any stationary solution  (\ref{GenDynProj}) or (\ref{DynProj}) is a stationary solution to the Lagrange formulation (\ref{GenDyn}).

\section{The linear eigenvalue problem}
\label{sec:LinEig}

We consider the linear eigenvalue problem of the form $\bA \bX = \bX \bLambda$
where $\bA \in  \mathbb{R}^{n \times n} $ is a positive definite matrix and $\bLambda$ is a diagonal matrix with eigenvalues $\lambda_i, i = 1, \ldots, n$.
A minimization formulation of this for finding the eigenvectors corresponding to the $p$ smallest eigenvalues is 
\begin{equation}\label{mineig}
\min_{\bX\in {\rm St}(n,p)}
F(\bX),\ F(\bX) = \dfrac{1}{2} {\rm tr} (\bX^{\top}\bA \bX).
\end{equation}
We will assume that the eigenvalues are sorted such that
 the $p$ smallest eigenvalues are  $\lambda_j, j=1, \ldots, p$ with corresponding eigenvectors $\mathbf{v}_j, j=1, \ldots, p$. Note that the solution of (\ref{eqas}), say $\hat{\bX}$, has columns that span the eigenspace $\left\{\mathbf{v}_j \right\}_{j=1}^p$ and $F(\hat{\bX}) = \sum_{j=1}^p \lambda_j$.
 
It is easily seen that $\bG(\bX) = \bA \bX$ and
$$
 \bT = \bK\odot ( \bX^{\top} \bX- \bI)- 2\bX^{\top} \bA \bX + 2\dot{\bX}^{\top}\dot{\bX}
$$
giving the dynamical system corresponding to the Lagrange formulation as
\[
\begin{array}{l}
\ddot{\bX} + \eta \dot{\bX} = -\bA \bX  - \bX \bM,\\
\bX^\top \bX \bM + \bM \bX^\top \bX = \bT, \\
\bT = \bK\odot ( \bX^{\top} \bX- \bI)- 2\bX^{\top} \bA \bX +2\dot{\bX}^{\top}\dot{\bX}.
\end{array}
\]
For the projected gradient method we have instead the considerably simpler system
\[
\ddot{\bX} + \eta \dot{\bX} = -(\bI -  \bX\bX^{\top})\bA \bX,\ \bX \in {\rm St}(n,p).
\]
The eigenvalues of $\hat{\bM}$ are 
$0 \geq \mu_1, \geq \ldots \geq \mu_p$ corresponding to the $p$ smallest eigenvalues of $\bA$ with corresponding eigenvectors $\mathbf{u}_i, i=1, \ldots ,p$ (the sign of $\mu_i$ is a consequence of the definition of the Lagrange function). 
As indicated before for this case we have at a stationary solution 
$$
-\bA \hat{\bX}- \hat{\bX} \hat{\bM} = -\bA \hat{\bX} -\hat{\bX}\hat{\bX}^{\top}\bA \hat{\bX} = -(\bI -  \hat{\bX}\hat{\bX}^{\top})\bA \hat{\bX}
$$
which shows  that the two dynamical systems have the same stationary solutions.

\section{The orthogonal Procrustes Problem}
\label{sec:Proc}

The orthogonal Procrustes problem is a matrix approximation problem in linear algebra. In its classical form, one is given two matrices $\bA, \bB$
and wants  to find an orthogonal matrix $\bX$
that minimizes the distance between $\bA \bX$ and $\bB$.
We formulate this in the Frobenius norm, i.e., solving (\ref{eqas}) where
$$
F(\bX) = \dfrac{1}{2}\|\bA\bX - \bB \|_F^2, \bG ( \bX) =   \bA^\top (\bA \bX - \mathbf{B}), %% \bX (\bA\bX-\bB)^{\top} \bA \bX
$$
$\bA\in \R^{m\times n}, \bB\in \R^{m\times p},$ and
the norm $\|\cdot\|_F$ is the Frobenius norm. 

The dynamical system for the Lagrange approach looks like
\[
\begin{array}{l}
\ddot{\bX} + \eta \dot{\bX} = -\bA^\top (\bA \bX - \mathbf{B}) - \bX \bM,\\
\bX^\top \bX \bM + \bM \bX^\top \bX = \bT, \\
\bT = \mathbf{K} \odot (\bX^\top \bX - \mathbf{I}) - \mathbf{G}^\top \bX + 2 \dot{\bX}^\top \dot{\bX} - \bX^\top \mathbf{G},  \\
\mathbf{G} = \bA^\top (\bA \bX - \mathbf{B})
\end{array}
\]

The projected gradient system given by (\ref{GenDynProj}) is
$$
\ddot{\bX} + \eta \dot{\bX} =    -(\bI - \bX \bX^{\top})\bG (\bX) , \  \bC (\bX) = \bzeros,
$$
where
$\bG ( \bX) =   \bA^\top (\bA \bX - \mathbf{B})$.

\section{Asymptotic Stability}
\label{AsStabSec}

In this section we collect the, well known, results for asymptotic stability that we use later on.

\begin{definition}[Asymptotic Stability]
Consider the nonlinear dynamical system 
\[
\dot{\mathbf{z}} = \mathbf{g}(\mathbf{z}),
\]
where \(\mathbf{z} \in \mathbb{R}^q\) and \(\mathbf{g}: \mathbb{R}^q \to \mathbb{R}^q\) is a smooth vector field. An equilibrium point \(\hat{\mathbf{z}}\) (i.e., \(\mathbf{g}(\hat{\mathbf{z}}) = \mathbf{0}\)) is said to be \textit{asymptotically stable} if:
\begin{enumerate}
    \item It is \textit{stable} in the sense of Lyapunov: For every \(\epsilon > 0\), there exists \(\delta > 0\) such that if \(\|\mathbf{z}(0) - \hat{\mathbf{z}}\| < \delta\), then \(\|\mathbf{z}(t) - \hat{\mathbf{z}}\| < \epsilon\) for all \(t \geq 0\).
    \item It is \textit{attractive}: There exists \(\delta' > 0\) such that if \(\|\mathbf{z}(0) - \hat{\mathbf{z}}\| < \delta'\), then \(\lim_{t \to \infty} \mathbf{z}(t) = \hat{\mathbf{z}}\).
\end{enumerate}
\end{definition}

\begin{theorem}[Asymptotic Stability for Nonlinear Systems]
Let \(\hat{\mathbf{z}}\) be an equilibrium point of the system \(\dot{\mathbf{z}} = \mathbf{g}(\mathbf{z})\), where \(\mathbf{g}\) is continuously differentiable in a neighborhood of \(\hat{\mathbf{z}}\). Assume the following:
\begin{enumerate}
    \item The Jacobian matrix \(\mathbf{J}(\hat{\mathbf{z}}) = \left.\frac{\partial \mathbf{g}}{\partial \mathbf{z}}\right|_{\mathbf{z} = \hat{\mathbf{z}}}\) has all eigenvalues with strictly negative real parts.
    \item The higher-order terms of \(\mathbf{g}(\mathbf{z})\) are negligible near \(\hat{\mathbf{z}}\), i.e., \(\mathbf{g}(\mathbf{z}) = \mathbf{J}(\hat{\mathbf{z}})(\mathbf{z} - \hat{\mathbf{z}}) + \mathcal{O}(\|\mathbf{z} - \hat{\mathbf{z}}\|^2)\).
\end{enumerate}
Then, the equilibrium point \(\hat{\mathbf{z}}\) is asymptotically stable.
\end{theorem}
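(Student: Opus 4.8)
The plan is to use Lyapunov's indirect method: build an explicit quadratic Lyapunov function from the solution of a Lyapunov matrix equation associated to the Hurwitz matrix $\mathbf{J}(\hat{\mathbf{z}})$, and then control the higher-order remainder.

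First I would translate the equilibrium to the origin by setting $\mathbf{y} = \mathbf{z} - \hat{\mathbf{z}}$. By Assumption~2 the system takes the form $\dot{\mathbf{y}} = \mathbf{J}\mathbf{y} + \mathbf{r}(\mathbf{y})$, where $\mathbf{J} = \mathbf{J}(\hat{\mathbf{z}})$ and the remainder satisfies $\|\mathbf{r}(\mathbf{y})\| \le c\|\mathbf{y}\|^2$ on some ball $\|\mathbf{y}\| \le r_0$. Since every eigenvalue of $\mathbf{J}$ has strictly negative real part, the continuous Lyapunov equation $\mathbf{J}^{\top}\mathbf{P} + \mathbf{P}\mathbf{J} = -\mathbf{I}$ admits the unique solution $\mathbf{P} = \int_0^\infty e^{\mathbf{J}^{\top} t} e^{\mathbf{J} t}\,\mathrm{d}t$, which is symmetric and positive definite (the integral converges because $\mathbf{J}$ is Hurwitz, and positive definiteness follows from $\mathbf{y}^{\top}\mathbf{P}\mathbf{y} = \int_0^\infty \|e^{\mathbf{J}t}\mathbf{y}\|^2\,\mathrm{d}t > 0$ for $\mathbf{y}\neq\mathbf{0}$). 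I then set $V(\mathbf{y}) = \mathbf{y}^{\top}\mathbf{P}\mathbf{y}$, which satisfies $\lambda_{\min}(\mathbf{P})\|\mathbf{y}\|^2 \le V(\mathbf{y}) \le \lambda_{\max}(\mathbf{P})\|\mathbf{y}\|^2$, so $V$ is a positive definite, radially unbounded quadratic form.

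Next I would differentiate $V$ along trajectories: $\dot V = \mathbf{y}^{\top}(\mathbf{J}^{\top}\mathbf{P} + \mathbf{P}\mathbf{J})\mathbf{y} + 2\mathbf{y}^{\top}\mathbf{P}\,\mathbf{r}(\mathbf{y}) = -\|\mathbf{y}\|^2 + 2\mathbf{y}^{\top}\mathbf{P}\,\mathbf{r}(\mathbf{y})$, and estimate the cross term by $|2\mathbf{y}^{\top}\mathbf{P}\,\mathbf{r}(\mathbf{y})| \le 2\|\mathbf{P}\|_2\,\|\mathbf{y}\|\,\|\mathbf{r}(\mathbf{y})\| \le 2c\|\mathbf{P}\|_2\,\|\mathbf{y}\|^3$. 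Hence on the smaller ball $\|\mathbf{y}\| \le r := \min\{r_0,\ 1/(4c\|\mathbf{P}\|_2)\}$ one obtains $\dot V \le -\tfrac{1}{2}\|\mathbf{y}\|^2 \le -\kappa V$ with $\kappa = 1/(2\lambda_{\max}(\mathbf{P})) > 0$, so $\dot V < 0$ for $\mathbf{y}\neq\mathbf{0}$ in this ball. Finally I would pick $\rho > 0$ small enough that the sublevel set $\Omega_\rho = \{\mathbf{y} : V(\mathbf{y}) \le \rho\}$ lies inside $\{\|\mathbf{y}\| \le r\}$; then $\Omega_\rho$ is positively invariant, and from $\dot V \le -\kappa V$ we get $V(\mathbf{y}(t)) \le e^{-\kappa t} V(\mathbf{y}(0)) \to 0$, hence $\mathbf{y}(t)\to\mathbf{0}$, i.e. $\mathbf{z}(t)\to\hat{\mathbf{z}}$; Lyapunov stability follows from the same nested invariant sublevel sets of $V$, and the two conditions of the definition are met.

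The only genuinely delicate points are (i) the existence, uniqueness and positive definiteness of $\mathbf{P}$, which I expect to be the main step and would justify via the integral representation above together with the Hurwitz property, and (ii) ensuring the trajectory never escapes the neighborhood on which the quadratic bound $\|\mathbf{r}(\mathbf{y})\| \le c\|\mathbf{y}\|^2$ is valid — this is handled by working inside an invariant sublevel set of $V$ rather than an arbitrary Euclidean ball, since sublevel sets of $V$ (unlike balls) are forward invariant under the flow.
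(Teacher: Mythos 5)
Your proposal is correct and is essentially the argument the paper invokes: the paper does not prove the theorem itself but cites Theorem 4.7 of Khalil (Lyapunov's indirect method), whose standard proof is exactly your construction of $\mathbf{P}$ from the Lyapunov equation $\mathbf{J}^{\top}\mathbf{P}+\mathbf{P}\mathbf{J}=-\mathbf{I}$ and the quadratic Lyapunov function $V(\mathbf{y})=\mathbf{y}^{\top}\mathbf{P}\mathbf{y}$ with the remainder estimate on an invariant sublevel set. Your use of the bound $\|\mathbf{r}(\mathbf{y})\|\le c\|\mathbf{y}\|^{2}$ is legitimate here because the theorem's second hypothesis grants it explicitly, so no gap remains.
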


\begin{proof}
For a proof of this theorem, see Theorem 4.7 in the book~\cite{Khalil2002}.
The proof relies on Lyapunov's indirect method and linearization techniques.
\end{proof}

\begin{theorem}[Asymptotic Stability for Constrained Systems]
\label{Theorem:AsymConstr}
Let \(\hat{\mathbf{z}}\) be an equilibrium point of the system \(\dot{\mathbf{z}} = \mathbf{g}(\mathbf{z})\) subject to \(\mathbf{c}(\mathbf{z}) = \mathbf{0}\), where
\begin{itemize}
    \item \(\mathbf{g}: \mathbb{R}^q \to \mathbb{R}^q\) and \(\mathbf{c}: \mathbb{R}^q \to \mathbb{R}^m\) are continuously differentiable in a neighborhood of \(\hat{\mathbf{z}}\).
    \item The constraint Jacobian \(\mathbf{C}(\hat{\mathbf{z}}) = \left.\frac{\partial \mathbf{c}}{\partial \mathbf{z}}\right|_{\mathbf{z} = \hat{\mathbf{z}}}\) has full row rank.
\end{itemize}
Define the reduced Jacobian \(\mathbf{J}_r(\hat{\mathbf{z}})\) as the restriction of \(\mathbf{J}(\hat{\mathbf{z}}) = \left.\frac{\partial \mathbf{g}}{\partial \mathbf{z}}\right|_{\mathbf{z} = \hat{\mathbf{z}}}\) to the tangent space of the constraint manifold at \(\hat{\mathbf{z}}\), i.e., the subspace \(\{\mathbf{v} \in \mathbb{R}^q \mid \mathbf{C}(\hat{\mathbf{z}})\mathbf{v} = \mathbf{0}\}\).  

If all eigenvalues of \(\mathbf{J}_r(\hat{\mathbf{z}})\) have strictly negative real parts, then \(\hat{\mathbf{z}}\) is asymptotically stable under the constraint \(\mathbf{c}(\mathbf{z}) = \mathbf{0}\).
\end{theorem}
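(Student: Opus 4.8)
The plan is to reduce the constrained system to an \emph{unconstrained} dynamical system living on the constraint manifold and then invoke the preceding theorem (Asymptotic Stability for Nonlinear Systems). I will use throughout a hypothesis that is implicit in the phrase ``subject to $\bc(\bz)=\bzeros$'': that the zero level set $\mathcal{M}:=\{\bz\in\mathbb{R}^q : \bc(\bz)=\bzeros\}$ is invariant under the flow of $\dot{\bz}=\bg(\bz)$, equivalently that $\bg$ is tangent to $\mathcal{M}$, i.e.\ $\bC(\bz)\bg(\bz)=\bzeros$ for $\bz\in\mathcal{M}$. This is exactly what makes a ``constrained'' equilibrium meaningful, and in our applications it holds because the auxiliary damped system for the constraints is built so that $\bC\equiv\bzeros$ is an invariant set.

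First I would produce a local $C^1$ parametrization of $\mathcal{M}$ near $\hat{\bz}$. Since $\bC(\hat{\bz})$ has full row rank, $\bC(\hat{\bz})\bC(\hat{\bz})^\top$ is invertible; choosing $\bN\in\mathbb{R}^{q\times(q-m)}$ whose columns span $T:=\ker\bC(\hat{\bz})$ and solving $\bc\bigl(\hat{\bz}+\bN\mathbf{y}+\bC(\hat{\bz})^\top\mathbf{s}\bigr)=\bzeros$ for $\mathbf{s}=\mathbf{s}(\mathbf{y})$ by the implicit function theorem gives $\phi(\mathbf{y})=\hat{\bz}+\bN\mathbf{y}+\bC(\hat{\bz})^\top\mathbf{s}(\mathbf{y})$ with $\phi(\bzeros)=\hat{\bz}$, $\mathbf{s}(\bzeros)=\bzeros$ and $D\mathbf{s}(\bzeros)=\bzeros$, hence $D\phi(\bzeros)=\bN$, so that the range of $D\phi(\bzeros)$ is precisely the tangent space $T$ from the statement. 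Next I would write down the reduced dynamics: along a trajectory $\bz(t)=\phi(\mathbf{y}(t))$ in $\mathcal{M}$ one has $D\phi(\mathbf{y})\dot{\mathbf{y}}=\bg(\phi(\mathbf{y}))$, and because $\bg$ is tangent to $\mathcal{M}$ the right-hand side lies in the range of the (locally) injective linear map $D\phi(\mathbf{y})$, so this can be solved uniquely for $\dot{\mathbf{y}}=\bh(\mathbf{y})$ with $\bh$ of class $C^1$ and $\bh(\bzeros)=\bzeros$.

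Then I would identify the linearization of the reduced system with the reduced Jacobian of the statement. Differentiating $D\phi(\mathbf{y})\bh(\mathbf{y})=\bg(\phi(\mathbf{y}))$ at $\mathbf{y}=\bzeros$ --- the term containing the second derivative of $\phi$ drops out because $\bh(\bzeros)=\bzeros$ --- gives $\bN\,D\bh(\bzeros)=\bJ(\hat{\bz})\bN$. Differentiating the invariance relation $\bC(\bz)\bg(\bz)=\bzeros$ along $\mathcal{M}$ at $\hat{\bz}$, where $\bg(\hat{\bz})=\bzeros$, gives $\bC(\hat{\bz})\bJ(\hat{\bz})\mathbf{v}=\bzeros$ for every $\mathbf{v}\in T$, so $\bJ(\hat{\bz})$ maps $T$ into $T$ and $\bJ_r(\hat{\bz})$ is a genuine endomorphism of $T$ whose matrix in the basis formed by the columns of $\bN$ is exactly $D\bh(\bzeros)$; in particular $D\bh(\bzeros)$ and $\bJ_r(\hat{\bz})$ share the same spectrum, which by assumption lies in the open left half-plane. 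Applying the preceding theorem to $\dot{\mathbf{y}}=\bh(\mathbf{y})$ then makes $\mathbf{y}=\bzeros$ asymptotically stable, and since $\phi$ is a $C^1$ diffeomorphism from a neighborhood of $\bzeros$ onto a relative neighborhood of $\hat{\bz}$ in $\mathcal{M}$ (so $\phi$ and $\phi^{-1}$ are locally bi-Lipschitz), Lyapunov stability and attractivity transfer back under $\bz=\phi(\mathbf{y})$, which is the claimed asymptotic stability of $\hat{\bz}$ under the constraint.

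The step I expect to be the main obstacle is this identification: verifying that the intrinsically defined $\bJ_r(\hat{\bz})$ --- the restriction of $\bJ(\hat{\bz})$ to $\ker\bC(\hat{\bz})$ --- is conjugate to the linearization $D\bh(\bzeros)$ of the reduced vector field, which is what actually governs the stability. That is precisely where flow-invariance of $\mathcal{M}$ and the special form $D\phi(\bzeros)=\bN$ must be used, and where one should make explicit the standing assumption $\bC(\bz)\bg(\bz)=\bzeros$ on $\mathcal{M}$. Everything else --- the implicit function theorem, the $C^1$ regularity of $\bh$, and the transfer of asymptotic stability across a diffeomorphism --- is routine.
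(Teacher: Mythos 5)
Your proposal is correct and follows essentially the same route as the paper: parametrize the constraint manifold locally via the implicit function theorem, reduce the dynamics to coordinates on the tangent space $\ker\bC(\hat{\bz})$, identify the linearization of the reduced system with $\bJ_r(\hat{\bz})$, and conclude by Lyapunov's indirect method (the preceding theorem). Your version is in fact more careful than the paper's sketch: you make explicit the flow-invariance hypothesis $\bC(\bz)\bg(\bz)=\bzeros$ on the manifold and use it to show that $\bJ(\hat{\bz})$ maps $\ker\bC(\hat{\bz})$ into itself, so that the ``restriction'' $\bJ_r$ is well defined and genuinely conjugate to the reduced linearization $D\bh(\bzeros)$, a point the paper glosses over by simply writing $\bJ_r=\bF^{\top}\bJ(\hat{\bz})\bF$.
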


\begin{proof}
By the Implicit Function Theorem, the regularity condition ensures that near \(\hat{\mathbf{z}}\), the constraint \(\mathbf{c}(\mathbf{z}) = \mathbf{0}\) defines a smooth \((n-m)\)-dimensional manifold \(\mathcal{M}\). The tangent space at \(\hat{\mathbf{z}}\) is \(\nul \mathbf{C}(\hat{\mathbf{z}})\), with $\nul$ denoting the null space.
Project the system dynamics onto \(\mathcal{M}\) using local coordinates. Let \(\mathbf{z} = \hat{\mathbf{z}} + \mathbf{F}\mathbf{y} + \mathbf{o}(\|\mathbf{y}\|)\), where \(\mathbf{F}\) is a basis for \(\nul \mathbf{C}(\hat{\mathbf{z}})\), and \(\mathbf{y} \in \mathbb{R}^{n-m}\). The reduced dynamics are:
\[
\dot{\mathbf{y}} = \mathbf{F}^\top \mathbf{g}(\hat{\mathbf{z}} + \mathbf{F}\mathbf{y}) + \mathbf{o}(\|\mathbf{y}\|).
\]
Linearizing around \(\mathbf{y} = \mathbf{0}\) gives:
\[
\dot{\mathbf{y}} = \underbrace{\mathbf{F}^\top \mathbf{J}(\hat{\mathbf{z}}) \mathbf{F}}_{\mathbf{J}_r(\hat{\mathbf{z}})} \mathbf{y} + \mathbf{o}(\|\mathbf{y}\|).
\] 
By assumption, \(\mathbf{J}_r(\hat{\mathbf{z}})\) has eigenvalues with negative real parts. Thus, the linearized reduced system is asymptotically stable. 
Apply Lyapunov’s indirect method to the reduced system. Since the higher-order terms \(\mathbf{o}(\|\mathbf{y}\|)\) are negligible near \(\mathbf{y} = \mathbf{0}\), asymptotic stability of the linearized system implies asymptotic stability of the full constrained system.
\end{proof}

\section{First-order Systems}
\label{SysSec}

The aim of this section is to find the Jacobians in Section \ref{AsStabSec} for further asymptotic stability analysis. 
We do so by first presenting a perturbation w.r.t. $\bX$ and $\bV$ around a stationary point $\hat{\bX},\hat{\bY}$. The first-order perturbations are then vectorized in order to find the Jacobian and finally the eigenvalues are derived together with the asymptotic stability results.

\subsection{Eigenvalue Structure for the Lagrange Approach}

\label{SysDamp}

By formulating a first-order system and linearizing together with vectorization we detect the matrix whose eigenvalues are of interest. Note that the analysis below is not general but applicable to our applications.
Consider the system (\ref{GeneralDamp1})--(\ref{GeneralDamp3})
and define the mapping
\begin{equation}
\label{BigSysXV}
\left(
\begin{array}{l}
\dot{\bX}\\
\dot{\bV}
\end{array}
\right) =
\bPhi
\left(
\left(
\begin{array}{l}
\bX \\
\bV
\end{array}
\right) 
\right) =
\left(
\begin{array}{c}
\bV\\
- \eta \bV  -\bG - \bX \bM
\end{array}
\right).
\end{equation}
The first-order term of the right-hand side in (\ref{BigSysXV}) can be written as
$$
\left(
\begin{array}{c}
\Delta \bV\\
- \eta \Delta \bV  - \text{\rm d}\bG (\Delta \bX) - \Delta \bX \bM -\bX \text{\rm d}\bM (\Delta \bX, \Delta \bV)
\end{array}
\right).
$$
We make the following assumption that will be valid for our applications that appear later.
\begin{assumption}
\begin{align}
 \vect (\text{\rm d}\bG (\Delta \bX) + \Delta \bX \bM +\bX  \Delta \bM) = \\
 (\bI \otimes \bJ_{\bX}) \vect (\Delta \bX) + 
  (\bM \otimes \bI) \vect (\Delta \bX)+
   \bJ_{\bM}\vect (\Delta \bX)+
   \bJ_{\bV} \vect (\Delta \bV).
\end{align}
\end{assumption}
We then have 
\begin{align}
\vect \left(
\left(
\begin{array}{c}
\Delta \bV\\
- \eta \Delta \bV  - \text{\rm d}\bG (\Delta \bX) - \Delta \bX \bM -\bX \text{\rm d}\bM (\Delta \bX, \Delta \bV)
\end{array}
\right)  
\right) = \\
\left(
\begin{array}{cc}
\bzeros & \bI \\
- (\bI \otimes \bJ_{\bX})- (\bM \otimes \bI)-  \bJ_{\bM} & -\eta \bI - \bJ_{\bV}
\end{array}
\right) 
   \left(
\begin{array}{l}
\vect(\Delta \bX) \\
\vect(\Delta \bV)
\end{array}
\right).
\end{align}
At a stationary point $\hat{\bV}= \bzeros, \hat{\bX}$ we will in our applications have $\hat{\bJ}_{\bV}=\bzeros$ and we get
\begin{align}
\vect \left(
\left(
\begin{array}{c}
\Delta \bV\\
- \eta \Delta \bV  - \text{\rm d}\bG (\Delta \bX) - \Delta \bX \bM -\bX \text{\rm d}\bM (\Delta \bX, \Delta \bV)
\end{array}
\right)  
\right) = \\
\left(
\begin{array}{cc}
\bzeros & \bI \\
- \hat{\bJ} & -\eta \bI
\end{array}
\right) 
   \left(
\begin{array}{l}
\vect(\Delta \bX) \\
\vect(\Delta \bV)
\end{array}
\right),
\end{align}
where 
$$
\hat{\bJ} =  (\bI \otimes \bJ_{\bX})+ (\bM \otimes \bI)+  \bJ_{\bM}.
$$

\begin{comment}
Define 
$$
  \bPsi 
  \left(
    \left(
\begin{array}{l}
\vect(\bX) \\
\vect(\bV)
\end{array}
\right) 
  \right) = 
  \vect 
  \left( 
  \bPhi 
  \left(
\left(
\begin{array}{l}
\bX \\
\bV
\end{array}
\right) 
\right)
\right)
$$
and denote
\end{comment}
Define
$$
\hat{\bJ}_{\text{sys}} =
\left(
\begin{array}{cc}
\bzeros & \bI \\
-\hat{\bJ} & -\eta \bI 
\end{array}
\right) 
$$
and assume that the eigenvalues of $\hat{\bJ}$ are $\alpha_i, i=1, \ldots (np)^2$ then the eigenvalues of the matrix $\hat{\bJ}_{\text{sys}}$ are 
$$
\beta_i = -\dfrac{\eta}{2} \pm \sqrt{\dfrac{\eta^2}{4} - \alpha_i}.
$$
A necessary and sufficient condition for asymptotic convergence is that 
$$
 \Re (\beta_i) \leq 0
$$
or equivalently
$$
 \Re ( \alpha_i ) \geq 0.
$$
The last relation above implies that a sufficient condition for the system to be asymptotically stable is that the eigenvalues of the matrix $\hat{\bJ}$ are non-negative. This will be fully investigated for the applications appearing later.

\subsection{Eigenvalue Structure  when using Projection}

\label{SysProj}

Turning to the projected methods we need to consider that $\bX$ and $\bV$ are independent variables and therefore explicitly state that 
$\bV  \in  T_\bX {\rm St}(n,p)$. We then get the
DAE system
\begin{equation}
\label{bigsys1}
\begin{array}{l}
\dot{\bX} = \bV\\
\dot{\bV}=  - \eta \bV    -(\bI - \bX \bX^{\top})\bG\\
\bX  \in {\rm St}(n,p), \ \bV \in  T_\bX {\rm St}(n,p).
\end{array}
\end{equation}
In order to analyse the asymptotic stability we need to find an expression for 
 $\bF$ in Section \ref{AsStabSec}. We know that the tangent space of the constraint manifold w.r.t. $\bX$ is  $T_\bX {\rm St}(n,p)$. Since the tangent space is a canonical isomorphism the  tangent space of the constraint manifold $T_\bX {\rm St}(n,p)$ is itself. 
 
Consider the mapping
\begin{equation}
\label{BigMat}
\left(
\begin{array}{l}
\dot{\bX}\\
\dot{\bV}
\end{array}
\right) =
\bPhi
\left(
\left(
\begin{array}{l}
\bX \\
\bV
\end{array}
\right) 
\right) =
\left(
\begin{array}{c}
\bV\\
- \eta \bV -(\bI - \bX \bX^{\top})\bG
\end{array}
\right) 
\end{equation}
where we note that $(\bI - \bX \bX^{\top})\bG$ does not depend on $\bV$.
\begin{comment}
Define 
$$
  \bPsi 
  \left(
    \left(
\begin{array}{l}
\vect(\bX) \\
\vect(\bV)
\end{array}
\right) 
  \right) = 
  \vect 
  \left( 
  \bPhi 
  \left(
\left(
\begin{array}{l}
\bX \\
\bV
\end{array}
\right) 
\right)
\right)
$$
and denote
\end{comment}
Following similar notation as in the previous section we define
\begin{equation}
\label{Jsys}
\hat{\bJ}_{\text{sys}} =
\left(
\begin{array}{cc}
\bzeros & \bI \\
-\hat{\bJ} & -\eta \bI 
\end{array}
\right) 
\end{equation}
as the Jacobian of the vectorized system at a stationary point $\hat{\bV}= \bzeros, \hat{\bX}$, where $\hat{\bJ}$  is the Jacobian of $(\bI - \bX \bX^{\top})\bG$ w.r.t. 
$$
    \left(
\begin{array}{l}
\vect(\bX) \\
\vect(\bV)
\end{array}
\right).
$$
Define 
$$
\bP = \bX \bX^{\top}
$$
and
$$
\bP_\perp = \bI -\bX \bX^{\top},
$$
then if $\bX  \in {\rm St}(n,p)$, $\bP$ is an orthogonal projection onto $\text{range} (\bX)$ and $\bP_\perp$ is an orthogonal projection onto $T_\bX {\rm St}(n,p)$. The projection onto $T_\bX {\rm St}(n,p)$ in the vectorized space is then
$$
\bI \otimes \bP_\perp
$$
giving
\begin{align}
\bF = 
\left(
\begin{array}{cc}
\bI \otimes \bP_\perp & \bzeros \\
\bzeros &\bI \otimes \bP_\perp 
\end{array}
\right) 
\end{align}
and
\begin{align}
\bF^{\top}
\hat{\bJ}_{\text{sys}}
\bF=
\left(
\begin{array}{cc}
\bI \otimes \bP_\perp & \bzeros \\
\bzeros &\bI \otimes \bP_\perp 
\end{array}
\right) 
\left(
\begin{array}{cc}
\bzeros & \bI \\
-\hat{\bJ} & -\eta \bI 
\end{array}
\right) 
\left(
\begin{array}{cc}
\bI \otimes \bP_\perp & \bzeros \\
\bzeros &\bI \otimes \bP_\perp 
\end{array}
\right) =\\
\left(
\begin{array}{cc}
\bzeros & (\bI \otimes \bP_\perp) \\
-(\bI \otimes \bP_\perp)\hat{\bJ}(\bI \otimes \bP_\perp) & -\eta (\bI \otimes \bP_\perp) 
\end{array}
\right). 
\end{align}
For later use we note that if  $\bX  \in {\rm St}(n,p)$
$$
\bP_\perp \bX  = (\bI -\bX \bX^{\top})\bX = \bzeros,
\bX^{\top}\bP_\perp = \bX^{\top}(\bI -\bX \bX^{\top}) = \bzeros
$$
or in vectorized space
$$
(\bI \otimes \bP_\perp) \vect(\bX)  =  \bzeros,
\vect(\bX)^{\top}(\bI \otimes \bP_\perp)= \bzeros
$$
and that the eigenvalue problem 
$$
\left(
\begin{array}{cc}
\bzeros & (\bI \otimes \bP_\perp) \\
-(\bI \otimes \bP_\perp)\hat{\bJ}(\bI \otimes \bP_\perp) & -\eta (\bI \otimes \bP_\perp) 
\end{array}
\right) 
   \left(
\begin{array}{l}
\vect(\bX) \\
\vect(\bV)
\end{array}
\right) = 
\kappa 
   \left(
\begin{array}{l}
\vect(\bX) \\
\vect(\bV)
\end{array}
\right)
$$
can be rewritten as
$$
\left(
\begin{array}{cc}
\bzeros &  \bI\\
-(\bI \otimes \bP_\perp)\hat{\bJ}(\bI \otimes \bP_\perp) & -\eta \bI
\end{array}
\right) 
   \left(
\begin{array}{l}
\vect(\bX) \\
(\bI \otimes \bP_\perp)\vect(\bV)
\end{array}
\right) = 
\kappa 
   \left(
\begin{array}{l}
\vect(\bX) \\
(\bI \otimes \bP_\perp)\vect(\bV)
\end{array}
\right)
$$
which has a matrix of the same structure as in (\ref{Jsys}).

\section{The Eigenvalue Problem for the Lagrange formulation}

Consider the eigenvalue problem with
$$
F(\bX) = \dfrac{1}{2} {\rm tr} (\bX^{\top}\bA \bX), \ \bG(\bX) = \bA \bX
$$
and
$$
 \bT = \bK\odot ( \bX^{\top} \bX- \bI)- 2\bX^{\top} \bA \bX + 2\dot{\bX}^{\top}\dot{\bX}
$$
giving
\begin{eqnarray}
\ddot{\bX} + \eta \dot{\bX} = -\bA \bX  - \bX \bM,\label{EigDampSys1}\\
\bX^\top \bX \bM + \bM \bX^\top \bX = \mathbf{T}, \label{EigDampSys2}\\
\bT = \bK\odot ( \bX^{\top} \bX- \bI)- 2\bX^{\top} \bA \bX +2\dot{\bX}^{\top}\dot{\bX}.\label{EigDampSys3}
\end{eqnarray}
At the stationary solution we have
$$
 \hat{\bM} = \dfrac{1}{2}\hat{\bT} =    -\hat{\bX}^{\top} \bA \hat{\bX}.
$$

\subsection{Eigenvalues of the Jacobian and Asymptotic Stability}

To simplify the analysis we first formulate the system (\ref{EigDampSys1})-(\ref{EigDampSys3})
using a similarity transformation of $\hat{\bM}$ as
$$
   \hat{\bM} = \bQ \hat{\bOmega} \bQ^{\top}, \, \hat{\bOmega} = \text{diag}(\mu_1, \ldots , \mu_p), \,  \bQ^{\top}  \bQ = \bI, \mu_1 \geq  \cdots  \geq\mu_p.
$$
Note that $-\mu_i \geq 0$ are the $p$ smallest eigenvalues of $\bA$. Define
$$
    \bY =\bX  \bQ, \, \bM_{\bQ} = \bQ^{\top} \bM  \bQ, \, \bT_{\bQ} = \bQ^{\top}\bT\bQ
$$
giving
$$
\hat{\bM}_{\bQ} = \hat{\bOmega} = \dfrac{1}{2}\hat{\bT}_{\bQ} = -\hat{\bY}^{\top} \bA \hat{\bY}, \, \bA \hat{\bY} =  -\hat{\bY} \hat{\bOmega}.
$$
In other words, $\hat{\bM}_{\bQ}$ will have eigenvalues $\mu_i$ and eigenvectors as unit vectors $\be_i$.

We now transform the system (\ref{EigDampSys1})-(\ref{EigDampSys3}) (change of basis for the column space of $\bX$) into the equivalent system
\begin{eqnarray}
\ddot{\bY} + \eta \dot{\bY} = -\bA \bY  - \bY \bM_{\bQ},\label{EigDampSysQ1}\\
\bY^\top \bY \bM_{\bQ} + \bM_{\bQ} \bY^\top \bY = \bT_{\bQ},\label{EigDampSysQ2} \\
\bT_{\bQ} = \bQ^{\top}\left( \bK\odot \left( \bQ( \bY^{\top} \bY- \bI)\right) \bQ^{\top} \right)\bQ- 2\bY^{\top} \bA \bY +2\dot{\bY}^{\top}\dot{\bY}.\label{EigDampSysQ3}
\end{eqnarray}

If we define $\bS(\bY) = \bA \bY  + \bY \bM_{\bQ}(\bY) $ we want to show that the eigenvalues of  the matrix $\bJ$ in
$$
\vect( \bS(\hat{\bY} + \Delta \bY) - \bS(\hat{\bY}) ) = \bJ(\hat{\bY})\vect(\Delta \bY ) + \mathcal{O} (\| \Delta \bY \|_F^2)
$$
are all non-negative, see Section \ref{SysSec}.
We begin by stating a lemma.
\begin{lemma}
\label{DeltaSLemma}
$$
\bS(\hat{\bY} + \Delta \bY) - \bS(\hat{\bY}) = \bA\, \Delta \bY  + \Delta \bY\, \hat{\bOmega}+ \hat{\bY}\, \Delta \bM_{\bQ}
$$

$$
2\,  \hat{\bY}\Delta  \bM_{\bQ}  =  \hat{\bY}\, \Delta \bT_{\bQ} -   \hat{\bY}\hat{\bOmega} \, \Delta \bY^{\top} \hat{\bY} -  \hat{\bY}\hat{\bOmega} \hat{\bY}^{\top}   \, \Delta \bY  -   \hat{\bY}\, \Delta \bY^{\top}\hat{\bY} \hat{\bOmega} -  \hat{\bY}\hat{\bY}^{\top}\, \Delta \bY \hat{\bOmega}
$$
\begin{eqnarray}
 \hat{\bY}\, \Delta \bT_{\bQ} =  \hat{\bY}\bQ^{\top}\left(\bK\odot \bQ( \, \Delta\bY^{\top}\,  \hat{\bY} +  \hat{\bY}^{\top}\,   \, \Delta\bY) \bQ^{\top} \right)\bQ- 2 \hat{\bY}\, \Delta\bY^{\top}  \hat{\bY}\hat{\bOmega} -  2\hat{\bY}\hat{\bY}^{\top} \bA  \, \Delta\bY
\end{eqnarray}
and
$$
\begin{array}{l}
\bS(\hat{\bY} + \Delta \bY) - \bS(\hat{\bY}) = 
\bA\, \Delta \bY  + \Delta \bY\, \hat{\bOmega}+ \\
\dfrac{1}{2}
\left( 
\hat{\bY}\, \Delta \bT_{\bQ} -   \hat{\bY}\hat{\bOmega} \, \Delta \bY^{\top} \hat{\bY} -  \hat{\bY}\hat{\bOmega} \hat{\bY}^{\top}   \, \Delta \bY  -   \hat{\bY}\, \Delta \bY^{\top}\hat{\bY} \hat{\bOmega} -  \hat{\bY}\hat{\bY}^{\top}\, \Delta \bY \hat{\bOmega} 
\right) =
\end{array}
$$
$$
\begin{array}{l}
\bA\, \Delta \bY  + \Delta \bY\, \hat{\bOmega}+ \\
\dfrac{1}{2}
\left( 
\hat{\bY}\bQ^{\top}\left(\bK\odot \bQ( \, \Delta\bY^{\top}\,  \hat{\bY} +  \hat{\bY}^{\top}\,   \, \Delta\bY) \bQ^{\top} \right)\bQ
-2 \hat{\bY}\, \Delta\bY^{\top}  \hat{\bY}\hat{\bOmega} 
-  2\hat{\bY}\hat{\bY}^{\top} \bA  \, \Delta\bY
\right.\\
\left.
-   \hat{\bY}\hat{\bOmega} \, \Delta \bY^{\top} \hat{\bY} -  \hat{\bY}\hat{\bOmega} \hat{\bY}^{\top}   \, \Delta \bY  
-   \hat{\bY}\, \Delta \bY^{\top}\hat{\bY} \hat{\bOmega} 
-  \hat{\bY}\hat{\bY}^{\top}\, \Delta \bY \hat{\bOmega} 
\right).
\end{array}
$$

\end{lemma}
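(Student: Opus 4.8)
The plan is to linearise $\bS(\bY) = \bA\bY + \bY\bM_{\bQ}(\bY)$ around the stationary point $\hat{\bY}$ by an elementary product-rule computation, using throughout the two facts available there: the constraint is exactly satisfied, $\hat{\bY}^{\top}\hat{\bY} = \bI_p$ (because $\hat{\bX}$ is orthonormal and $\bQ$ orthogonal), and the Lagrange block is diagonal, $\hat{\bM}_{\bQ} = \hat{\bOmega}$ with $\bA\hat{\bY} = -\hat{\bY}\hat{\bOmega}$. First I would write $\bS(\hat{\bY}+\Delta\bY) = \bA(\hat{\bY}+\Delta\bY) + (\hat{\bY}+\Delta\bY)\,\bM_{\bQ}(\hat{\bY}+\Delta\bY)$ and expand $\bM_{\bQ}(\hat{\bY}+\Delta\bY) = \hat{\bOmega} + \Delta\bM_{\bQ} + \mathcal{O}(\|\Delta\bY\|_F^2)$. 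Before doing so one should check that $\bM_{\bQ}(\cdot)$ really is well defined and smooth in a neighbourhood of $\hat{\bY}$, even though $\bY^{\top}\bY\neq\bI_p$ off the constraint: the symmetric Sylvester equation \eqref{EigDampSysQ2} has a unique solution whenever $\bY^{\top}\bY$ is positive definite, since the linear map $\bM\mapsto\bY^{\top}\bY\,\bM+\bM\,\bY^{\top}\bY$ has eigenvalues equal to the pairwise sums of the positive eigenvalues of $\bY^{\top}\bY$. Collecting the first-order terms and cancelling $\bS(\hat{\bY}) = \bA\hat{\bY}+\hat{\bY}\hat{\bOmega}$ then yields the first displayed identity $\bS(\hat{\bY}+\Delta\bY)-\bS(\hat{\bY}) = \bA\,\Delta\bY+\Delta\bY\,\hat{\bOmega}+\hat{\bY}\,\Delta\bM_{\bQ}$ up to terms of order $\|\Delta\bY\|_F^2$.

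Next I would obtain $\Delta\bM_{\bQ}$ by implicitly differentiating \eqref{EigDampSysQ2} at $\hat{\bY}$. The left-hand side contributes $(\Delta\bY^{\top}\hat{\bY}+\hat{\bY}^{\top}\Delta\bY)\hat{\bOmega}$, $\hat{\bOmega}(\Delta\bY^{\top}\hat{\bY}+\hat{\bY}^{\top}\Delta\bY)$, $\hat{\bY}^{\top}\hat{\bY}\,\Delta\bM_{\bQ}$ and $\Delta\bM_{\bQ}\,\hat{\bY}^{\top}\hat{\bY}$; using $\hat{\bY}^{\top}\hat{\bY}=\bI_p$ the last two collapse to $2\,\Delta\bM_{\bQ}$, so that $2\,\Delta\bM_{\bQ}=\Delta\bT_{\bQ}-(\Delta\bY^{\top}\hat{\bY}+\hat{\bY}^{\top}\Delta\bY)\hat{\bOmega}-\hat{\bOmega}(\Delta\bY^{\top}\hat{\bY}+\hat{\bY}^{\top}\Delta\bY)$, and left-multiplying by $\hat{\bY}$ gives the second displayed identity for $2\,\hat{\bY}\,\Delta\bM_{\bQ}$. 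Then I would linearise $\bT_{\bQ}$ itself from \eqref{EigDampSysQ3}: since $\bQ$ and $\bK$ are constant and $\bY^{\top}\bY-\bI_p$ linearises to $\Delta\bY^{\top}\hat{\bY}+\hat{\bY}^{\top}\Delta\bY$, the Hadamard block contributes $\bQ^{\top}(\bK\odot(\bQ(\Delta\bY^{\top}\hat{\bY}+\hat{\bY}^{\top}\Delta\bY)\bQ^{\top}))\bQ$; the $-2\bY^{\top}\bA\bY$ block contributes $-2(\Delta\bY^{\top}\bA\hat{\bY}+\hat{\bY}^{\top}\bA\,\Delta\bY)$; and the $2\dot{\bY}^{\top}\dot{\bY}$ block contributes nothing to first order, because the perturbation is taken at the equilibrium velocity $\hat{\bV}=\dot{\hat{\bY}}=\bzeros$. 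Left-multiplying by $\hat{\bY}$ and eliminating $\bA\hat{\bY}$ via $\bA\hat{\bY}=-\hat{\bY}\hat{\bOmega}$ in the term that contains $\bA\hat{\bY}$ (leaving the other $\bA$-term untouched) produces the third displayed identity for $\hat{\bY}\,\Delta\bT_{\bQ}$.

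Finally I would assemble the three pieces: substitute the expression for $\hat{\bY}\,\Delta\bT_{\bQ}$ into the identity for $2\,\hat{\bY}\,\Delta\bM_{\bQ}$, then insert the resulting $\hat{\bY}\,\Delta\bM_{\bQ}$ into the expansion of $\bS(\hat{\bY}+\Delta\bY)-\bS(\hat{\bY})$; multiplying out the factor $\frac{1}{2}$ reproduces the last displayed formula of the lemma. I expect the main obstacle to be bookkeeping rather than ideas: the expansion generates many structurally similar rank-deficient terms of the forms $\hat{\bY}(\cdots)\hat{\bY}$, $\hat{\bY}\hat{\bY}^{\top}(\cdots)$ and $\hat{\bY}(\cdots)\hat{\bOmega}$, and one must take care not to merge, drop, or double-count them when left-multiplying by $\hat{\bY}$, and to keep the left/right placement of $\hat{\bOmega}$ straight. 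The two points worth flagging explicitly are the smoothness of the implicitly defined $\bM_{\bQ}$ near $\hat{\bY}$ (via solvability of the symmetric Sylvester equation) and the vanishing of the velocity-dependent part of $\bT_{\bQ}$ at $\hat{\bV}=\bzeros$; the rest is routine linearisation.
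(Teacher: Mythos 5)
Your proposal follows essentially the same route as the paper's own (very terse) proof: expand $\bS(\bY)=\bA\bY+\bY\bM_{\bQ}(\bY)$ around $\hat{\bY}$, implicitly differentiate the Sylvester equation \eqref{EigDampSysQ2} at the stationary point where $\hat{\bY}^{\top}\hat{\bY}=\bI$ and $\hat{\bM}_{\bQ}=\hat{\bOmega}$, and linearize \eqref{EigDampSysQ3}, the velocity term being quadratic and hence absent at $\hat{\bV}=\bzeros$. Your remark on the well-posedness and smoothness of $\bM_{\bQ}$ near $\hat{\bY}$ (invertibility of $\bM\mapsto\bY^{\top}\bY\,\bM+\bM\,\bY^{\top}\bY$ for positive definite $\bY^{\top}\bY$) is a useful addition that the paper leaves implicit.

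One step needs attention: your final substitution does not literally produce the third display of the lemma. Linearizing \eqref{EigDampSysQ3} and left-multiplying by $\hat{\bY}$ gives
$\hat{\bY}\,\Delta\bT_{\bQ}=\hat{\bY}\bQ^{\top}\bigl(\bK\odot\bQ(\Delta\bY^{\top}\hat{\bY}+\hat{\bY}^{\top}\Delta\bY)\bQ^{\top}\bigr)\bQ-2\hat{\bY}\,\Delta\bY^{\top}\bA\hat{\bY}-2\hat{\bY}\hat{\bY}^{\top}\bA\,\Delta\bY$,
which is exactly where the paper's proof stops. Eliminating $\bA\hat{\bY}$ via $\bA\hat{\bY}=-\hat{\bY}\hat{\bOmega}$ turns the middle term into $+2\hat{\bY}\,\Delta\bY^{\top}\hat{\bY}\hat{\bOmega}$, i.e.\ with the opposite sign to the $-2\hat{\bY}\,\Delta\bY^{\top}\hat{\bY}\hat{\bOmega}$ appearing in the statement; a quick check with $n=2$, $p=1$, $\bA=\mathrm{diag}(\lambda,1)$, $\hat{\bY}=\be_1$, $\hat{\bOmega}=-\lambda$ confirms the discrepancy. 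So you should either keep the $\bA$-form, as the paper's proof does, or state explicitly that the sign of that term in the displayed identity must be corrected; as written, your claim that the substitution ``produces the third displayed identity'' glosses over this mismatch rather than resolving it.
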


\begin{proof}
$$
\bS(\hat{\bY} + \Delta \bY) - \bS(\hat{\bY}) = \bA\, \Delta \bY  + \Delta \bY\, \hat{\bOmega}+ \bY\, \Delta \bM_{\bQ}
$$

$$
2\,  \hat{\bY}\Delta  \bM_{\bQ}  =  \hat{\bY}\, \Delta \bT_{\bQ} -   \hat{\bY}\hat{\bM}_{\bQ} \, \Delta \bY^{\top} \hat{\bY} -  \hat{\bY}\hat{\bM}_{\bQ} \hat{\bY}^{\top}   \, \Delta \bY  -  \,  \hat{\bY}\Delta \bY^{\top}\hat{\bY} \hat{\bM}_{\bQ} -  \hat{\bY}\hat{\bY}^{\top}\, \Delta \bY \hat{\bM}_{\bQ}
$$
\begin{eqnarray}
\, \Delta \bT_{\bQ} = \bQ^{\top}\left(\bK\odot \bQ( \, \Delta\bY^{\top}\,  \hat{\bY} + \hat{\bY}^{\top}\,   \, \Delta\bY) \bQ^{\top} \right)\bQ- 2(\, \Delta\bY^{\top} \bA \hat{\bY} +  \hat{\bY}^{\top} \bA  \, \Delta\bY)
\end{eqnarray}
\end{proof}
The following lemma will help when vectorizing the result above.
\begin{lemma}
\label{KronLemma0}
Assume that the matrices $\bE, \bF, \bH$ have sizes such that the product $\bE \bF \bH$ is well defined then
$$
\begin{array}{l}
 \vect(\bE \bF \bH) = (\bH^{\top} \otimes \bE)\vect(\bF), \
  \vect(\bE \bF^{\top} \bH) = (\bH^{\top} \otimes \bE)\bN \vect(\bF), \
 (\bE \otimes \bF)\bN = \bN(\bF \otimes \bE),
 \end{array}
$$
where $\bN$ is the commutation matrix, see \cite{wikicomm}.
\end{lemma}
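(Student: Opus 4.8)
The three identities are classical facts about the $\vect$ operator and the commutation matrix, so the plan is to establish the first one from scratch and then bootstrap the other two from it together with the defining property of $\bN$. First I would recall that the commutation matrix $\bN$ (of the appropriate size) is the unique permutation matrix characterised by $\bN\,\vect(\bF)=\vect(\bF^{\top})$ for all $\bF$; this characterisation and the mixed-product rule for Kronecker products are the only external ingredients needed.

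For $\vect(\bE\bF\bH)=(\bH^{\top}\otimes\bE)\vect(\bF)$ the plan is to argue by linearity in $\bF$: both sides are linear in $\bF$, so it suffices to verify the identity on rank-one matrices $\bF=\ba\bb^{\top}$. For such an $\bF$ we have $\bE\bF\bH=(\bE\ba)(\bH^{\top}\bb)^{\top}$, hence $\vect(\bE\bF\bH)=(\bH^{\top}\bb)\otimes(\bE\ba)=(\bH^{\top}\otimes\bE)(\bb\otimes\ba)=(\bH^{\top}\otimes\bE)\vect(\bF)$, using $\vect(\ba\bb^{\top})=\bb\otimes\ba$ and the mixed-product rule. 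Writing a general $\bF$ as a sum $\sum_{i,j}F_{ij}\,\be_i\be_j^{\top}$ of rank-one terms and adding up the identities finishes this step. (An equally valid route is a column-by-column check, since the $k$-th column of $\bE\bF\bH$ equals $\bE\bF$ times the $k$-th column of $\bH^{\top}$.)

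The second identity is then immediate: applying the first with $\bF^{\top}$ in place of $\bF$ gives $\vect(\bE\bF^{\top}\bH)=(\bH^{\top}\otimes\bE)\vect(\bF^{\top})$, and substituting $\vect(\bF^{\top})=\bN\vect(\bF)$ yields the claim. For the third identity, $(\bE\otimes\bF)\bN=\bN(\bF\otimes\bE)$, I would test both sides on an arbitrary $\vect(\bX)$ of the correct dimension: the left-hand side gives $(\bE\otimes\bF)\vect(\bX^{\top})=\vect(\bF\bX^{\top}\bE^{\top})$ and the right-hand side gives $\bN\,\vect(\bE\bX\bF^{\top})=\vect\bigl((\bE\bX\bF^{\top})^{\top}\bigr)=\vect(\bF\bX^{\top}\bE^{\top})$, in both cases using the first identity and the characterisation of $\bN$. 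Since the two expressions coincide for every $\bX$, the matrices are equal.

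The main point to be careful about --- essentially the only obstacle --- is dimension bookkeeping: the symbol $\bN$ denotes different commutation matrices in the second and third identities, since they act on vectorisations of differently-shaped matrices, so one must fix a convention and verify that the permutation matrices on the two sides of each equation genuinely have matching sizes. Once that is settled the computation is routine, and the standard reference \cite{wikicomm} records all of these facts.
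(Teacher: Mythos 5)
Your argument is correct, and there is in fact nothing in the paper to compare it against: the paper states Lemma \ref{KronLemma0} without proof, simply citing the standard reference \cite{wikicomm}, so your write-up supplies the missing justification rather than deviating from an existing one. The route you take is the standard one and it is sound: linearity plus verification on rank-one matrices $\ba\bb^{\top}$ (using $\vect(\ba\bb^{\top})=\bb\otimes\ba$ and the mixed-product rule) gives $\vect(\bE\bF\bH)=(\bH^{\top}\otimes\bE)\vect(\bF)$; substituting $\bF^{\top}$ and using the defining property $\bN\vect(\bF)=\vect(\bF^{\top})$ gives the second identity; and testing $(\bE\otimes\bF)\bN$ and $\bN(\bF\otimes\bE)$ on an arbitrary $\vect(\bX)$ reduces the third identity to the first two. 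You are also right to flag the one genuine subtlety, which the paper's notation glosses over: the symbol $\bN$ stands for commutation matrices of different sizes on the two sides of the third identity (and in the second identity it is tied to the shape of $\bF$), so the statement is really $(\bE\otimes\bF)\bN_{n,q}=\bN_{m,p}(\bF\otimes\bE)$ for $\bE\in\R^{m\times n}$, $\bF\in\R^{p\times q}$; with that convention fixed, your verification closes the argument. The only thing your proof buys beyond the paper is self-containedness, at the cost of a few lines the authors chose to delegate to the reference.
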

We now apply Lemma \ref{KronLemma0} to the products in Lemma \ref{DeltaSLemma}.
\begin{lemma}
\label{KronLemma}
The terms in Lemma \ref{DeltaSLemma} satisfies the following.
$$
\begin{array}{l}
 \vect(\bA\, \Delta \bY) = (\bI \otimes \bA) \vect( \Delta \bY)\\
 \vect(\Delta \bY\, \bOmega) = (\bOmega \otimes \bI)  \vect( \Delta \bY)\\
\vect(\hat{\bY}\bOmega \, \Delta \bY^{\top} \hat{\bY}) = (\bY^{\top} \otimes  \hat{\bY}\bOmega)\bN \vect( \Delta \bY)\\
\vect(\hat{\bY}\bOmega \hat{\bY}^{\top}   \, \Delta \bY) =(\bI \otimes  \hat{\bY}\bOmega \hat{\bY}^{\top} ) \vect( \Delta \bY)\\
\vect(\hat{\bY}\, \Delta \bY^{\top}\hat{\bY} \bOmega) = 
(\Omega \hat{\bY}^{\top}  \otimes \hat{\bY})\bN \vect( \Delta \bY)
\\
\vect(\hat{\bY}\hat{\bY}^{\top}\, \Delta \bY \bOmega) = (\Omega \otimes \hat{\bY}\hat{\bY}^{\top})\vect( \Delta \bY)\\

\vect(\hat{\bY}^{\top}\bA \, \Delta \bY) = (\bI \otimes \hat{\bY}\hat{\bY}^{\top}\bA)\vect( \Delta \bY)\\
\vect(\hat{\bY} \, \Delta \bY^{\top} \hat{\bY}) = (\bY^{\top} \otimes  \hat{\bY})\bN \vect( \Delta \bY)\\
\vect(\hat{\bY}\bY^{\top} \, \Delta  \hat{\bY}) = ( \bI \otimes  \hat{\bY}\bY^{\top}) \vect( \Delta \bY),
\end{array}
$$
and acts on the eigenvector $\mathbf{u}_i \otimes \mathbf{v}_j$ as
$$
\begin{array}{l}
 (\bI \otimes \bA)(\mathbf{u}_i \otimes \mathbf{v}_j) = \lambda_j(\mathbf{u}_i \otimes \mathbf{v}_j),i=1,\ldots, p,j=1,\ldots, n\\
 (\bOmega \otimes \bI)  (\mathbf{u}_i \otimes \mathbf{v}_j) =\mu_i(\mathbf{u}_i \otimes \mathbf{v}_j),i=1,\ldots, p,j=1,\ldots, n\\
 (\bY^{\top} \otimes  \hat{\bY}\bOmega)\bN(\mathbf{u}_i \otimes \mathbf{v}_j) =\mu_i (\mathbf{u}_j \otimes \mathbf{v}_i),i=1,\ldots, p,j=1,\ldots, p\\
(\bI \otimes  \hat{\bY}\bOmega \hat{\bY}^{\top} ) (\mathbf{u}_i \otimes \mathbf{v}_j) =\mu_j (\mathbf{u}_i \otimes \mathbf{v}_j),i=1,\ldots, p,j=1,\ldots, p\\
(\Omega \hat{\bY}^{\top}  \otimes \hat{\bY})\bN(\mathbf{u}_i \otimes \mathbf{v}_j) =\mu_j (\mathbf{u}_j \otimes \mathbf{v}_i),i=1,\ldots, p,j=1,\ldots, p
\\
 (\Omega \otimes \hat{\bY}\hat{\bY}^{\top})(\mathbf{u}_i \otimes \mathbf{v}_j) =\mu_i (\mathbf{u}_i \otimes \mathbf{v}_j),i=1,\ldots, p,j=1,\ldots, p\\
(\bI \otimes \hat{\bY}\hat{\bY}^{\top}\bA)(\mathbf{u}_i \otimes \mathbf{v}_j) =\lambda_j (\mathbf{u}_i \otimes \mathbf{v}_j),i=1,\ldots, p,j=1,\ldots, p\\
 (\bY^{\top} \otimes  \hat{\bY})\bN (\mathbf{u}_i \otimes \mathbf{v}_j) =(\mathbf{u}_j \otimes \mathbf{v}_i),i=1,\ldots, p,j=1,\ldots, p\\
 ( \bI \otimes  \hat{\bY}\bY^{\top})(\mathbf{u}_i \otimes \mathbf{v}_j) =(\mathbf{u}_i \otimes \mathbf{v}_j),i=1,\ldots, p,j=1,\ldots, p,
\end{array}
$$
and are zero otherwise,
where
$$
(\mathbf{u}_j \otimes \mathbf{v}_i)^{\top}(\mathbf{u}_i \otimes \mathbf{v}_j) = \delta_{ij},i=1,\ldots, p,j=1,\ldots, n
$$
i.e., $(\mathbf{u}_j \otimes \mathbf{v}_i)$ and $(\mathbf{u}_i \otimes \mathbf{v}_j)$ are orthonormal.
\end{lemma}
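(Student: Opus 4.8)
The statement splits into two independent halves: the nine vectorization identities, and the description of how the resulting Kronecker operators act on the product vectors $\mathbf{u}_i\otimes\mathbf{v}_j$. The first half is a purely mechanical consequence of Lemma~\ref{KronLemma0}: every term appearing in Lemma~\ref{DeltaSLemma} has one of the two shapes $\bE\,\Delta\bY\,\bH$ or $\bE\,\Delta\bY^{\top}\bH$ with $\bE,\bH$ constant at the stationary point, so I would simply read off $\vect(\bE\,\Delta\bY\,\bH)=(\bH^{\top}\otimes\bE)\vect(\Delta\bY)$ and $\vect(\bE\,\Delta\bY^{\top}\bH)=(\bH^{\top}\otimes\bE)\bN\vect(\Delta\bY)$, substituting the flanking matrices (e.g. $\bE=\hat{\bY}\hat{\bOmega}$, $\bH=\hat{\bY}$ in the third line, $\bE=\hat{\bY}$, $\bH=\hat{\bY}\hat{\bOmega}$ in the fifth, etc.). The only care needed here is to keep the hats on $\hat{\bY}$ and to match the two different sizes $n$ and $p$ of the identity blocks; where the statement prints $\bY^{\top}$ or $\hat{\bY}^{\top}\bA$ I read $\hat{\bY}^{\top}$ and $\hat{\bY}\hat{\bY}^{\top}\bA$, consistently with the terms actually produced in Lemma~\ref{DeltaSLemma}.

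For the second half I would first assemble the spectral data. Write $\mathbf{u}_i=\be_i\in\R^{p}$ for the standard basis and $\mathbf{v}_1,\dots,\mathbf{v}_n$ for an orthonormal eigenbasis of $\bA$ with $\bA\mathbf{v}_j=\lambda_j\mathbf{v}_j$. From $\bA\hat{\bY}=-\hat{\bY}\hat{\bOmega}$ it follows that $\hat{\bY}\be_i$ is a unit eigenvector of $\bA$ for the eigenvalue $-\mu_i=\lambda_i$, so, with the eigenvectors ordered and signed accordingly, $\hat{\bY}\be_i=\mathbf{v}_i$ for $i=1,\dots,p$. Hence $\hat{\bY}^{\top}\mathbf{v}_j=\be_j$ when $j\le p$ and $\hat{\bY}^{\top}\mathbf{v}_j=\bzeros$ when $j>p$ by orthonormality of the $\mathbf{v}_j$, and consequently $\hat{\bY}\hat{\bY}^{\top}=\sum_{k\le p}\mathbf{v}_k\mathbf{v}_k^{\top}$ is the orthogonal projector onto $\text{range}(\hat{\bY})$, so $\hat{\bY}\hat{\bY}^{\top}\mathbf{v}_j$, $\hat{\bY}\hat{\bY}^{\top}\bA\mathbf{v}_j$, and $\hat{\bY}\hat{\bOmega}\hat{\bY}^{\top}\mathbf{v}_j$ equal $\mathbf{v}_j$, $\lambda_j\mathbf{v}_j$, $\mu_j\mathbf{v}_j$ respectively when $j\le p$ and vanish when $j>p$. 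I would also record the two elementary facts $(\bE\otimes\bF)(\ba\otimes\bb)=(\bE\ba)\otimes(\bF\bb)$ and $\bN(\ba\otimes\bb)=\bb\otimes\ba$.

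With these in hand each of the nine actions is a one-line check. For the two diagonal operators one gets $(\bI\otimes\bA)(\mathbf{u}_i\otimes\mathbf{v}_j)=\mathbf{u}_i\otimes\bA\mathbf{v}_j=\lambda_j(\mathbf{u}_i\otimes\mathbf{v}_j)$ and $(\bOmega\otimes\bI)(\mathbf{u}_i\otimes\mathbf{v}_j)=\bOmega\mathbf{u}_i\otimes\mathbf{v}_j=\mu_i(\mathbf{u}_i\otimes\mathbf{v}_j)$, valid for all $j\le n$. For the remaining seven I would push $\bN$ through first (when present) to swap the two tensor factors, then apply $(\bE\otimes\bF)(\ba\otimes\bb)=\bE\ba\otimes\bF\bb$ and substitute the projector identities above; for instance $(\hat{\bY}^{\top}\otimes\hat{\bY}\hat{\bOmega})\bN(\mathbf{u}_i\otimes\mathbf{v}_j)=\hat{\bY}^{\top}\mathbf{v}_j\otimes\hat{\bY}\hat{\bOmega}\mathbf{u}_i=\mu_i(\mathbf{u}_j\otimes\mathbf{v}_i)$ when $j\le p$ and $\bzeros$ when $j>p$ because $\hat{\bY}^{\top}\mathbf{v}_j=\bzeros$ there — which is exactly the content of the ``are zero otherwise'' clause in those rows. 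The same mechanism (a factor $\hat{\bY}^{\top}$, $\hat{\bY}\hat{\bY}^{\top}$, $\hat{\bY}\hat{\bY}^{\top}\bA$, or $\hat{\bY}\hat{\bOmega}\hat{\bY}^{\top}$ hitting $\mathbf{v}_j$) kills the $n-p$ directions $\mathbf{v}_{p+1},\dots,\mathbf{v}_n$ and produces the stated restriction $i,j\le p$ for those cases. Finally the orthonormality assertion is immediate: $(\mathbf{u}_j\otimes\mathbf{v}_i)^{\top}(\mathbf{u}_i\otimes\mathbf{v}_j)=(\be_j^{\top}\be_i)(\mathbf{v}_i^{\top}\mathbf{v}_j)=\delta_{ij}$.

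I expect no genuine obstacle here; the computation is entirely bookkeeping. The three places that merit attention are the two distinct dimensions $n$ and $p$ (and hence which identity block appears in each Kronecker factor), the action of the commutation matrix $\bN$ on a tensor product of vectors of unequal length, and — most importantly — tracking which index ranges make the output nonzero, since every operator that factors through $\hat{\bY}^{\top}$ collapses the orthogonal complement of $\text{range}(\hat{\bY})$. A secondary nuisance, already flagged above, is that the statement carries a few cosmetic slips (a missing hat on $\bY$, and $\hat{\bY}^{\top}\bA$ where $\hat{\bY}\hat{\bY}^{\top}\bA$ is intended); these should be read in agreement with the terms genuinely arising in Lemma~\ref{DeltaSLemma}.
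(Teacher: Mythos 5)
Your proposal is correct and takes exactly the route the paper intends: the paper states this lemma without a separate proof, presenting it as a direct application of Lemma~\ref{KronLemma0} to the terms of Lemma~\ref{DeltaSLemma}, which is precisely your first half, and your second half (using $\hat{\bY}\mathbf{u}_i=\mathbf{v}_i$, $\hat{\bY}^{\top}\mathbf{v}_j=\mathbf{u}_j$ for $j\le p$ and $\bzeros$ otherwise, together with $(\bE\otimes\bF)(\ba\otimes\bb)=\bE\ba\otimes\bF\bb$ and $\bN(\ba\otimes\bb)=\bb\otimes\ba$) correctly supplies the eigen-action verification the paper leaves implicit. Your reading of the few typographical slips (missing hats, $\hat{\bY}\hat{\bY}^{\top}\bA$ in place of $\hat{\bY}^{\top}\bA$) agrees with the terms actually produced in Lemma~\ref{DeltaSLemma}, so no gap remains.
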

For a general $\bK$ we were not able to get further in the analysis so we make the  simplification  $\bK_{ij}=\nu$ where $\nu$ is a positive constant.
\begin{lemma}
\label{SeparateJhat}
Assume that $\bK_{ij}=\nu$ then
$$
\begin{array}{l}
\bJ(\hat{\bY}) =  (\bI \otimes \bA) + (\bOmega \otimes \bI)  
+ \nu\dfrac{1}{2}\left(  \hat{\bY}(\hat{\bY}\otimes \bI) \bN +   \hat{\bY}( \bI \otimes \hat{\bY})  \right)  - \\
(\Omega \hat{\bY}^{\top}  \otimes \hat{\bY})\bN -
(\bI \otimes \hat{\bY}\hat{\bY}^{\top}\bA) -
\dfrac{1}{2}
\left(
 (\bY^{\top} \otimes  \hat{\bY}\bOmega)\bN +
 (\bI \otimes  \hat{\bY}\bOmega \hat{\bY}^{\top} )+
 (\Omega \hat{\bY}^{\top}  \otimes \hat{\bY})\bN+
 (\Omega \otimes \hat{\bY}\hat{\bY}^{\top})
\right)
   \end{array}
$$
and
$$
\begin{array}{l}
\bJ(\hat{\bY})(\mathbf{u}_i \otimes \mathbf{v}_j) =  
\lambda_j (\bI \otimes \bA)(\mathbf{u}_i \otimes \mathbf{v}_j) + 
\mu_i  (\mathbf{u}_i \otimes \mathbf{v}_j)
+ 
\nu\dfrac{1}{2}\left(  
(\mathbf{u}_j \otimes \mathbf{v}_i) + (\mathbf{u}_i \otimes \mathbf{v}_j)  \right)  - \\
\mu_j (\mathbf{u}_j \otimes \mathbf{v}_i)  -
\lambda_j (\mathbf{u}_i \otimes \mathbf{v}_j) -
\dfrac{1}{2}
\left(
\mu_i (\mathbf{u}_j \otimes \mathbf{v}_i)+
\mu_j (\mathbf{u}_i \otimes \mathbf{v}_j)+
 \mu_j (\mathbf{u}_j \otimes \mathbf{v}_i)+
\mu_i  (\mathbf{u}_i \otimes \mathbf{v}_j)
\right).
   \end{array}
$$
\end{lemma}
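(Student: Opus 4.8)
The statement of Lemma \ref{SeparateJhat} is purely computational: it asserts the explicit Kronecker-product form of $\bJ(\hat{\bY})$ under the simplification $\bK_{ij}=\nu$, and then records how $\bJ(\hat{\bY})$ acts on the tensor eigenvectors $\mathbf{u}_i\otimes\mathbf{v}_j$. So the proof is an assembly step, not a new idea. First I would start from the closed-form expression for $\bS(\hat{\bY}+\Delta\bY)-\bS(\hat{\bY})$ proved in Lemma \ref{DeltaSLemma}, i.e. the sum
\[
\bA\,\Delta\bY+\Delta\bY\,\hat{\bOmega}+\tfrac12\Big(\hat{\bY}\,\Delta\bT_{\bQ}-\hat{\bY}\hat{\bOmega}\,\Delta\bY^{\top}\hat{\bY}-\hat{\bY}\hat{\bOmega}\hat{\bY}^{\top}\Delta\bY-\hat{\bY}\,\Delta\bY^{\top}\hat{\bY}\hat{\bOmega}-\hat{\bY}\hat{\bY}^{\top}\Delta\bY\hat{\bOmega}\Big),
\]
with $\hat{\bY}\,\Delta\bT_{\bQ}$ expanded via the same lemma. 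Setting $\bK_{ij}=\nu$ turns the Hadamard term $\hat{\bY}\bQ^{\top}(\bK\odot\,\cdot\,)\bQ$ into $\nu\,\hat{\bY}(\Delta\bY^{\top}\hat{\bY}+\hat{\bY}^{\top}\Delta\bY)$, because $\nu\bJ_{\text{ones}}\odot M = \nu M$; that is the one place the simplification is used.

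\emph{Vectorization.} Next I would apply Lemma \ref{KronLemma0} term by term to push $\vect(\cdot)$ through each triple product, exactly as tabulated in Lemma \ref{KronLemma}. Collecting coefficients of $\vect(\Delta\bY)$ gives $\bJ(\hat{\bY})$ as the displayed sum of Kronecker/commutation-matrix terms; one should double-check that the two occurrences of $(\Omega\hat{\bY}^{\top}\otimes\hat{\bY})\bN$ — one from the $\hat{\bY}\hat{\bOmega}\,\Delta\bY^{\top}\hat{\bY}$ term of $\Delta\bM_{\bQ}$ and one from the $-2\hat{\bY}\,\Delta\bY^{\top}\hat{\bY}\hat{\bOmega}$ term of $\hat{\bY}\,\Delta\bT_{\bQ}$ — are kept separate, which is how the paper has written it. For the action on $\mathbf{u}_i\otimes\mathbf{v}_j$, I would simply substitute the eigenvector identities already listed at the end of Lemma \ref{KronLemma} (using $\bA\mathbf{v}_j=\lambda_j\mathbf{v}_j$, $\hat{\bOmega}\mathbf{u}_i=\mu_i\mathbf{u}_i$, $\hat{\bY}^{\top}\hat{\bY}=\bI$, $\bA\hat{\bY}=-\hat{\bY}\hat{\bOmega}$, and orthonormality of $\{\mathbf{v}_j\}$, $\{\mathbf{u}_i\}$) into each term and read off the second displayed formula. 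The swap $\mathbf{u}_i\otimes\mathbf{v}_j\mapsto\mathbf{u}_j\otimes\mathbf{v}_i$ is exactly the effect of the commutation matrix $\bN$ on a decomposable tensor, which accounts for every transposed term.

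\emph{Main obstacle.} The only real danger is bookkeeping: getting every coefficient, sign, and the placement of $\tfrac12$ and $\nu$ correct across the eight or nine terms, and being careful that $\hat{\bY}\bOmega$ versus $\hat{\bY}\hat{\bY}^{\top}\bA$ (which coincide on the relevant subspace since $\bA\hat{\bY}=-\hat{\bY}\hat{\bOmega}$) are treated consistently with how Lemma \ref{KronLemma} lists them. A secondary subtlety is that several of the tabulated actions in Lemma \ref{KronLemma} are only stated for $j=1,\dots,p$ (the terms involving $\hat{\bY}\hat{\bY}^{\top}$ annihilate components with $\mathbf{v}_j\notin\operatorname{range}(\hat{\bY})$), so when writing $\bJ(\hat{\bY})(\mathbf{u}_i\otimes\mathbf{v}_j)$ one must remember that for $j>p$ only the $(\bI\otimes\bA)$ and $(\bOmega\otimes\bI)$ terms — and the surviving half of the $\nu$ term — contribute; I would state the formula for $j\le p$ as in the lemma and note the $j>p$ case separately. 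Modulo that, the proof is just "expand, vectorize with Lemma \ref{KronLemma0}, collect, and evaluate on eigenvectors with Lemma \ref{KronLemma}."
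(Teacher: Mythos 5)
Your approach coincides with the paper's: Lemma \ref{SeparateJhat} is stated there without a separate proof, the intended argument being exactly the assembly you describe — set $\bK_{ij}=\nu$ so the Hadamard term collapses to $\nu\,\hat{\bY}(\Delta\bY^{\top}\hat{\bY}+\hat{\bY}^{\top}\Delta\bY)$ (using $\bQ^{\top}\bQ=\bI$), vectorize the expression from Lemma \ref{DeltaSLemma} with Lemmas \ref{KronLemma0}--\ref{KronLemma}, collect coefficients of $\vect(\Delta\bY)$ (correctly keeping the two occurrences of $(\hat{\bOmega}\hat{\bY}^{\top}\otimes\hat{\bY})\bN$ separate), and then evaluate on $\mathbf{u}_i\otimes\mathbf{v}_j$. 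One correction to your side remark about $j>p$: there is no ``surviving half'' of the $\nu$ term there, since both $(\bI\otimes\hat{\bY}\hat{\bY}^{\top})(\mathbf{u}_i\otimes\mathbf{v}_j)$ and $(\hat{\bY}^{\top}\otimes\hat{\bY})\bN(\mathbf{u}_i\otimes\mathbf{v}_j)=(\hat{\bY}^{\top}\mathbf{v}_j)\otimes(\hat{\bY}\mathbf{u}_i)$ vanish when $\mathbf{v}_j\perp\mathrm{range}(\hat{\bY})$; for $j>p$ only $(\bI\otimes\bA)+(\hat{\bOmega}\otimes\bI)$ acts, giving $(\lambda_j+\mu_i)(\mathbf{u}_i\otimes\mathbf{v}_j)$, which is precisely why $\nu$ appears only in the $j\le p$ sum in the proof of Theorem \ref{AsymThmEig}.
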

\begin{theorem}
\label{AsymThmEig}
The dynamical system (\ref{EigDampSys1})-(\ref{EigDampSys3}) with $\bK_{ij} = \nu$ is asymptotically
stable.
\end{theorem}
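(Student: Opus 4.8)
The plan is to linearize the first-order form of (\ref{EigDampSys1})--(\ref{EigDampSys3}) and then invoke the spectral criterion of Section~\ref{AsStabSec}. First I would observe that $\bY=\bX\bQ$, with $\bQ$ the fixed orthogonal matrix diagonalizing $\hat{\bM}$, is a linear invertible change of variables, so it conjugates the flow and preserves asymptotic stability; it therefore suffices to analyze the equivalent system (\ref{EigDampSysQ1})--(\ref{EigDampSysQ3}) at the equilibrium $(\hat{\bY},\bzeros)$. One then checks that near $(\hat{\bY},\bzeros)$ the matrix $\bM_\bQ$ is a smooth function of $(\bY,\dot{\bY})$ --- the Sylvester operator $\bM\mapsto\bY^\top\bY\bM+\bM\bY^\top\bY$ is invertible because $\bY^\top\bY$ stays positive definite --- and that its linearization in $\dot{\bY}$ vanishes at $\dot{\bY}=\bzeros$, since the only $\dot{\bY}$-dependence of $\bT_\bQ$ is the quadratic term $2\dot{\bY}^\top\dot{\bY}$. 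This places us exactly in the situation of Section~\ref{SysDamp}: the Jacobian of the vectorized first-order system at $(\hat{\bY},\bzeros)$ is $\hat{\bJ}_{\text{sys}}=\begin{pmatrix}\bzeros&\bI\\-\bJ(\hat{\bY})&-\eta\bI\end{pmatrix}$, whose spectrum consists of the numbers $-\tfrac{\eta}{2}\pm\sqrt{\tfrac{\eta^2}{4}-\alpha}$ as $\alpha$ runs over the eigenvalues of $\bJ(\hat{\bY})$. Because $\eta>0$, the theorem reduces to showing that every eigenvalue $\alpha$ of $\bJ(\hat{\bY})$ is real and $\ge 0$: then each $\beta$ has $\Re(\beta)\le 0$, and $\Re(\beta)<0$ whenever $\alpha>0$.

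Next I would block-diagonalize $\bJ(\hat{\bY})$ using Lemma~\ref{SeparateJhat}. Reading off the action of $\bJ(\hat{\bY})$ on the orthonormal vectors $\mathbf{u}_i\otimes\mathbf{v}_j$, one sees that $\bJ(\hat{\bY})$ maps each of the following mutually orthogonal subspaces into itself, and together they span $\R^{np}$: the lines $\operatorname{span}\{\mathbf{u}_i\otimes\mathbf{v}_i\}$ for $i\le p$; the planes $\operatorname{span}\{\mathbf{u}_i\otimes\mathbf{v}_j,\ \mathbf{u}_j\otimes\mathbf{v}_i\}$ for $1\le i<j\le p$; and the lines $\operatorname{span}\{\mathbf{u}_i\otimes\mathbf{v}_j\}$ for $i\le p<j\le n$. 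Hence the spectrum of $\bJ(\hat{\bY})$ is the union of the spectra of these $1\times1$ and $2\times2$ blocks, and it is enough to verify non-negativity block by block.

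The one-dimensional blocks are immediate. On $\operatorname{span}\{\mathbf{u}_i\otimes\mathbf{v}_i\}$ the eigenvalue, once the $\lambda$- and $\mu$-terms cancel, reduces to a positive number. On $\operatorname{span}\{\mathbf{u}_i\otimes\mathbf{v}_j\}$ with $i\le p<j$ the eigenvalue equals $\lambda_j+\mu_i=\lambda_j-\lambda_i$; since $\lambda_i$ is among the $p$ smallest eigenvalues of the positive-definite matrix $\bA$ while $\lambda_j$ is not, $\lambda_j-\lambda_i\ge\lambda_{p+1}-\lambda_p\ge 0$ (strictly positive under the natural gap assumption $\lambda_p<\lambda_{p+1}$). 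The substantive case is the $2\times2$ blocks on $\operatorname{span}\{\mathbf{u}_i\otimes\mathbf{v}_j,\ \mathbf{u}_j\otimes\mathbf{v}_i\}$: there $\bJ(\hat{\bY})$ restricts to a real $2\times2$ matrix with entries assembled from $\nu$ and $\mu_i=-\lambda_i$, $\mu_j=-\lambda_j$. Its trace collapses to $\nu$, so what remains is to show its determinant is $\ge 0$; I expect it to factor into a product whose sign is forced by $\nu>0$ together with positive-definiteness of $\bA$ (i.e.\ $\mu_i,\mu_j<0$), with the value $0$ attained exactly on the direction $\mathbf{u}_i\otimes\mathbf{v}_j-\mathbf{u}_j\otimes\mathbf{v}_i$, which corresponds to the $O(p)$ freedom in $\hat{\bX}$. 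A real $2\times2$ matrix with non-negative trace and non-negative determinant has both eigenvalues with non-negative real part, so this finishes the block analysis and hence the theorem.

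The step I expect to be the \emph{main obstacle} is precisely this determinant computation for the coupled $2\times2$ blocks: it is the one place where the algebra does not collapse by cancellation, where positive-definiteness of $\bA$ is genuinely needed, and where the commutation-matrix bookkeeping inherited from Lemmas~\ref{KronLemma0}--\ref{SeparateJhat} must be carried through without a sign error. A secondary remark worth making explicit is that the zero eigenvalues of $\bJ(\hat{\bY})$ --- arising from the directions $\mathbf{u}_i\otimes\mathbf{v}_j-\mathbf{u}_j\otimes\mathbf{v}_i$, and also from the lines with $i\le p<j$ in the degenerate case $\lambda_p=\lambda_{p+1}$ --- produce $\beta=0$; along the corresponding finite-dimensional subspace the system sits at equilibria on which $F$ is constant, so ``asymptotically stable'' is to be read in the sense that trajectories starting near $\hat{\bX}$ converge to this set of equilibria rather than to a single point.
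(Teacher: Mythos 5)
Your reduction to the sign of the spectrum of $\bJ(\hat{\bY})$ follows the paper's setup (Section~\ref{SysDamp} plus Lemma~\ref{SeparateJhat}), but from there the paper argues differently: it never forms your $2\times 2$ blocks, it instead expands $\bw=\sum_{ij}\alpha_{ij}(\mathbf{u}_i\otimes\mathbf{v}_j)$, computes $\bw^{\top}\bJ(\hat{\bY})\bw$ keeping only the $\alpha_{ij}^2$ coefficients, and allocates all weight to the smallest coefficient to obtain the bound $\min\{\nu-2\mu_1,\ \lambda^*+\mu_p\}\geq 0$. Your invariant-subspace decomposition is legitimate and potentially sharper, but the single step you defer --- the determinant of the coupled block on $\operatorname{span}\{\mathbf{u}_i\otimes\mathbf{v}_j,\ \mathbf{u}_j\otimes\mathbf{v}_i\}$ for $i\neq j\leq p$ --- is precisely the substance of the theorem, and it does not resolve the way you predict if you take Lemma~\ref{SeparateJhat} at face value. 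Reading the block off that lemma gives diagonal entries $\tfrac{\nu}{2}\pm\tfrac{\mu_i-\mu_j}{2}$ and off-diagonal entries $\tfrac{\nu}{2}-\tfrac{\mu_i+3\mu_j}{2}$ and $\tfrac{\nu}{2}-\tfrac{3\mu_i+\mu_j}{2}$, so the trace is indeed $\nu$ but the determinant is $(\mu_i+\mu_j)\bigl(\nu-\mu_i-\mu_j\bigr)<0$ (block eigenvalues $\mu_i+\mu_j<0$ and $\nu-\mu_i-\mu_j>0$), because $\mu_i,\mu_j<0$. So, as written, your plan terminates in an apparent negative eigenvalue rather than the non-negativity you need; "I expect it to factor with the right sign" is exactly the part that cannot be waved through.

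The way to rescue your route is to notice that the negative determinant comes from a sign slip in the stated Lemma~\ref{DeltaSLemma}: since $\bA\hat{\bY}=-\hat{\bY}\hat{\bOmega}$, the term $-2\hat{\bY}\,\Delta\bY^{\top}\bA\hat{\bY}$ in $\hat{\bY}\,\Delta\bT_{\bQ}$ equals $+2\hat{\bY}\,\Delta\bY^{\top}\hat{\bY}\hat{\bOmega}$, not $-2\hat{\bY}\,\Delta\bY^{\top}\hat{\bY}\hat{\bOmega}$. With the corrected linearization the off-diagonal entries become $\tfrac{\nu}{2}\mp\tfrac{\mu_i-\mu_j}{2}$, the block has determinant $0$ and trace $\nu$, hence eigenvalues $0$ and $\nu$ with kernel exactly on the antisymmetric direction $\mathbf{u}_i\otimes\mathbf{v}_j-\mathbf{u}_j\otimes\mathbf{v}_i$ (your $O(p)$ gauge direction); a scalar sanity check with $n=p=1$ also gives the corrected diagonal value $\nu$ rather than $\nu-2\mu_i$. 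In other words, your structural picture (trace $\nu$, zero exactly along the rotational freedom, $\lambda_j+\mu_i\geq 0$ for $j>p$) is the right one, but the proposal is incomplete at its decisive point: without carrying out, and in fact repairing, this block computation you have neither established non-negativity of the spectrum of $\bJ(\hat{\bY})$ nor reproduced the paper's Rayleigh-quotient bound, and the zero eigenvalues you acknowledge at the end (which force the weaker reading of "asymptotic stability" toward the set of equilibria) are precisely what the missing computation has to control.
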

\begin{proof}
The smallest eigenvalue of  $\bJ(\hat{\bY})$ is given by
$$
\min_\bw \dfrac{\bw^{\top}\bJ(\hat{\bY})\bw}{\bw^{\top} \bw},
$$
where
$$
\bw = \sum_{ij}\alpha_{ij}(\mathbf{u}_i \otimes \mathbf{v}_j) 
$$
and $\|\bw  \| =1$ if $\sum_{ij} \alpha_{ij}^2=1$.
From Lemma \ref{SeparateJhat} we get that
$$
\bw^{\top}\bJ(\hat{\bY})\bw = 
\sum_{i=1}^p \sum_{j=1}^n \alpha_{ij}^2 (\lambda_j + \mu_i) -
\sum_{i=1}^p \sum_{j=1}^p \alpha_{ij}^2 (\lambda_j + \mu_i)+
\sum_{i=1}^p \sum_{j=1}^p  \alpha_{ij}^2\nu -
 \dfrac{1}{2}
 \sum_{i=1}^p \sum_{j=1}^p 
 \alpha_{ij}^2
\left(
 \mu_i + \mu_j + \mu_j + \mu_i
\right)
$$
or simplifying
$$
\bw^{\top}\bJ(\hat{\bY})\bw = 
\sum_{i=1}^p \sum_{j=p+1}^n \alpha_{ij}^2 (\lambda_j + \mu_i) +
 \sum_{i=1}^p \sum_{j=1}^p 
 \alpha_{ij}^2
\left(\nu -\mu_i -\mu_j 
\right).
$$

To minimize $\bw^{\top}\bJ(\hat{\bY})\bw$ under the constraint $\sum_{i=1}^p \sum_{j=1}^n \alpha_{ij}^2 = 1$, we allocate all weight to the term with the smallest coefficient. Define $\lambda^* = \min_{p+1\leq j \leq n}  \lambda_j \geq -\mu_p$ and note that $\min \mu_j = \mu_p<0$, $\max \mu_j = \mu_1<0$ we get the minimum as
$$
\min \left\{\nu - 2\mu_1, \lambda^* + \mu_p \right\} \geq 0.
$$
\end{proof}

\section{The Projected Gradient System for the Eigenvalue Problem}

We start by looking at
\begin{equation}
\label{ProjGradSys}
\begin{array}{l}
\dot{\bX} = \bV\\
\dot{\bV}=  - \eta \bV  -(\bI - \bX \bX^{\top})\bA \bX \\
\bC(\bX)=\bzeros.
\end{array}
\end{equation}
Define
$$
\bS_{\bG} = (\bI - \bX \bX^{\top})\bA \bX 
$$
and consider
$$
\Delta \bS_{\bG} = \bA \, \Delta \bX  -  \Delta\bX \, \hat{\bX}^{\top}\bA \hat{\bX}  - \hat{\bX} \, \Delta\bX^{\top} \, \bA \hat{\bX}  -
\hat{\bX}\hat{\bX}^{\top} \bA \, \Delta \bX =
 \bA \, \Delta \bX  +  \Delta\bX \, \hat{\bM} - \hat{\bX} \, \Delta\bX^{\top} \, \bA \hat{\bX} -
\hat{\bX}\hat{\bX}^{\top} \bA \, \Delta \bX
$$
giving the matrix of interest
$$
  (\bI \otimes \bA) + (\hat{\bM} \otimes \bI) - \left( (\bA \hat{\bX})^{\top} \otimes  \hat{\bX}\right) \bN - 
   (\bI \otimes \hat{\bX}\hat{\bX}^{\top} \bA). 
$$
However, if we assume that we are close to the constraints we should use the projected perturbation for the analysis, i.e., 
$$
\Delta \bX  = (\bI -  \hat{\bX}\hat{\bX}^{\top})\Delta \bZ
$$
that gives
$$
\Delta\bX^{\top} \, \bA \hat{\bX} = \bzeros, \ \hat{\bX}^{\top} \bA \, \Delta \bX = \bzeros
$$
which simplifies the Jacobian as
$$
  (\bI \otimes (\bI -  \hat{\bX}\hat{\bX}^{\top})\bA)  + (\hat{\bM} \otimes  (\bI -  \hat{\bX}\hat{\bX}^{\top})).
$$
\begin{theorem}
  The dynamical system (\ref{ProjGradSys}) is asymptotically stable.
\end{theorem}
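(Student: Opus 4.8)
The plan is to deduce asymptotic stability from Theorem~\ref{Theorem:AsymConstr}. The equilibrium is $\hat{\bX}\in{\rm St}(n,p)$ whose columns span the eigenspace of the $p$ smallest eigenvalues of $\bA$, together with $\hat{\bV}=\bzeros$; this is consistent because $\operatorname{span}(\hat{\bX})$ is $\bA$-invariant and hence $(\bI-\hat{\bX}\hat{\bX}^{\top})\bA\hat{\bX}=\bzeros$. Since $(\bI-\bX\bX^{\top})\bA\bX$ does not depend on $\bV$ we have $\hat{\bJ}_{\bV}=\bzeros$, so by Section~\ref{SysProj} the reduced Jacobian has the same block structure as \eqref{Jsys}, whose inner block $\hat{\bJ}$ is the Jacobian of $\bS_{\bG}=(\bI-\bX\bX^{\top})\bA\bX$ restricted to the tangent directions of the constraint manifold, and its eigenvalues are $\beta=-\tfrac{\eta}{2}\pm\sqrt{\tfrac{\eta^2}{4}-\alpha}$ with $\alpha$ ranging over the eigenvalues of that inner block. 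Hence it suffices to show that every such $\alpha$ is real and $\ge 0$.

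For the inner block I would use the simplified Jacobian obtained immediately above the theorem: with the projected perturbation $\Delta\bX=(\bI-\hat{\bX}\hat{\bX}^{\top})\Delta\bZ$ the cross terms $\Delta\bX^{\top}\bA\hat{\bX}$ and $\hat{\bX}^{\top}\bA\Delta\bX$ drop out and
\[
\hat{\bJ}=\bigl(\bI\otimes(\bI-\hat{\bX}\hat{\bX}^{\top})\bA\bigr)+\bigl(\hat{\bM}\otimes(\bI-\hat{\bX}\hat{\bX}^{\top})\bigr).
\]
I would then diagonalize this Kronecker-structured matrix explicitly, in the spirit of the computation in Theorem~\ref{AsymThmEig}. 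Writing $\bP_\perp=\bI-\hat{\bX}\hat{\bX}^{\top}$, letting $\mathbf{u}_i$ be orthonormal eigenvectors of the symmetric matrix $\hat{\bM}$ (eigenvalue $\mu_i\le 0$) and $\mathbf{v}_j$ orthonormal eigenvectors of $\bA$ (eigenvalue $\lambda_j$), the vectors $\mathbf{u}_i\otimes\mathbf{v}_j$ with $\mathbf{v}_j\perp\operatorname{span}(\hat{\bX})$ are eigenvectors of $\hat{\bJ}$ with eigenvalue $\mu_i+\lambda_j$, whereas for $\mathbf{w}\in\operatorname{span}(\hat{\bX})$ one has $\bP_\perp\mathbf{w}=\bzeros$ and $\bP_\perp\bA\mathbf{w}=\bzeros$ (using $\bA$-invariance), so $\hat{\bJ}(\mathbf{u}_i\otimes\mathbf{w})=\bzeros$. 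Thus the $p^2$ zero modes of $\hat{\bJ}$ are exactly those annihilated by the tangent-space projection $\bI\otimes\bP_\perp$, and on the tangent directions $\hat{\bJ}$ has spectrum $\{\mu_i+\lambda_j:\ 1\le i\le p,\ \mathbf{v}_j\perp\operatorname{span}(\hat{\bX})\}$. Since $-\mu_i$ are the $p$ smallest eigenvalues of $\bA$ while the relevant $\lambda_j$ lie among the remaining $n-p$ eigenvalues, $\mu_i+\lambda_j=\lambda_j-(-\mu_i)\ge 0$, with minimum $\lambda^*+\mu_p$ in the notation of Theorem~\ref{AsymThmEig}. Hence every $\alpha\ge 0$, $\Re(\beta)\le 0$, and Theorem~\ref{Theorem:AsymConstr} gives the claim; strict decay holds under a spectral gap between the $p$-th and $(p+1)$-th eigenvalues of $\bA$.

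The step I expect to be the main obstacle is the bookkeeping around the constraint tangent space. Theorem~\ref{Theorem:AsymConstr} nominally requires a basis $\bF$ of the full tangent space $T_{\hat{\bX}}{\rm St}(n,p)$, whereas Section~\ref{SysProj} uses the smaller projector $\bI\otimes\bP_\perp$ rather than the full tangent projection $\mathcal{P}_{\top}$ of \eqref{GenProjGrad}; the discarded directions $\hat{\bX}S$ with $S$ skew are tangent to the $O(p)$-orbit of minimizers and lie in the $p^2$-dimensional kernel of $\hat{\bJ}$, so a literal application of the theorem with the full $\bF$ would contribute zero eigenvalues, i.e.\ only neutral stability. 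One therefore has to phrase the conclusion as asymptotic stability transverse to the orbit of minimizers (equivalently, asymptotic stability of the minimizing set), or pass to the quotient by this gauge freedom, and to check that the projected-perturbation computation above indeed captures the transverse spectrum. This is the part that needs care; the rest is routine Kronecker algebra.
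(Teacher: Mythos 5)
Your proposal is correct and follows essentially the same route as the paper: after restricting to projected perturbations $\Delta\bX=(\bI-\hat{\bX}\hat{\bX}^{\top})\Delta\bZ$, the paper's proof consists exactly of your Kronecker computation, namely that $(\bI\otimes(\bI-\hat{\bX}\hat{\bX}^{\top})\bA)+(\hat{\bM}\otimes(\bI-\hat{\bX}\hat{\bX}^{\top}))$ acts on $\mathbf{u}_i\otimes\mathbf{v}_j$ with $\mathbf{v}_j\perp\operatorname{span}(\hat{\bX})$ with eigenvalues $\lambda_j+\mu_i\ge 0$, Theorem~\ref{Theorem:AsymConstr} being invoked only as an alternative justification. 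Your closing caveat about the $p^2$ zero modes and the $O(p)$-orbit of minimizers flags something the paper glosses over (its criterion $\Re(\beta_i)\le 0$, equivalently $\alpha_i\ge 0$, only yields non-strict stability, and the minimizer is not isolated), so treating the conclusion as stability transverse to, or of, the set of minimizers is a legitimate sharpening of the paper's argument rather than a gap in yours.
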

\begin{proof}
   We have that
   $$
   \left( (\bI \otimes (\bI -  \hat{\bX}\hat{\bX}^{\top})\bA)  + (\hat{\bM} \otimes  (\bI -  \hat{\bX}\hat{\bX}^{\top})) \right) (\mathbf{u}_i \otimes \mathbf{v}_j) = 
   (\lambda_j + \mu_i) (\mathbf{u}_i \otimes \mathbf{v}_j), j=1,\ldots, n-p, i=1,\ldots, p,
   $$
   i.e., the eigenvalues are $\lambda_j + \mu_i$. Since $-\mu_i$ are the smallest eigenvalues of $\bA$ we conclude that $\lambda_j + \mu_i \geq 0$.
\end{proof}

Alternatively we refer to Theorem \ref{Theorem:AsymConstr} above where we note that all functions are twice continuously differentiable,  the projection is $\bI -  \hat{\bX}\hat{\bX}^{\top}$, and the Jacobian of the constraints has full rank as seen in the Lemma below. 

\begin{lemma}[Rank of the Jacobian of Orthonormality Constraints]
Let $\mathbf{X} \in \mathbb{R}^{n \times p}$ with $p < n$, and define the constraint function $\mathbf{C}(\mathbf{X}) = \mathbf{X}^\top\mathbf{X} - \mathbf{I}_p$. 
Let $\mathbf{g}(\mathbf{z}) = \operatorname{vec}(\mathbf{C}(\mathbf{X}))$ where $\mathbf{z} = \operatorname{vec}(\mathbf{X})$. 
At any point where $\mathbf{C}(\mathbf{X}) = \mathbf{0}$ (i.e., $\mathbf{X}^\top\mathbf{X} = \mathbf{I}_p$), the Jacobian matrix $\mathbf{J}_{\mathbf{g}}(\mathbf{z}) = \frac{\partial \mathbf{g}}{\partial \mathbf{z}}$ has rank:

\[
\operatorname{rank}(\mathbf{J}_{\mathbf{g}}(\mathbf{z})) = \frac{p(p + 1)}{2}.
\]

Moreover, the Jacobian has size $\frac{p(p + 1)}{2} \times np$ and thus it has full row rank.
\end{lemma}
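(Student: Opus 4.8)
The plan is to avoid the vectorized Jacobian as long as possible and instead work with the differential of the constraint map, vectorizing only for the final count. Differentiating $\bC(\bX) = \bX^\top\bX - \bI_p$ in a direction $\Delta\bX\in\R^{n\times p}$ yields the linear map
\[
L_\bX:\Delta\bX\longmapsto \Delta\bX^\top\bX+\bX^\top\Delta\bX ,
\]
whose rank equals $\operatorname{rank}(\bJ_{\bg}(\bz))$; the identities of Lemma~\ref{KronLemma0} in fact give $\bJ_{\bg}(\bz)=(\bI_p\otimes\bX^\top)+(\bX^\top\otimes\bI_p)\bN$, but this explicit form is not needed.

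First I would note that $L_\bX$ takes values in the space $\operatorname{Sym}(p)$ of symmetric $p\times p$ matrices, since $(\Delta\bX^\top\bX+\bX^\top\Delta\bX)^\top=\bX^\top\Delta\bX+\Delta\bX^\top\bX$; as $\dim\operatorname{Sym}(p)=p(p+1)/2$, this already gives $\operatorname{rank}(L_\bX)\le p(p+1)/2$. Then I would prove the reverse inequality by exhibiting an explicit preimage: for any symmetric $\bS\in\R^{p\times p}$ put $\Delta\bX=\tfrac12\bX\bS$; using $\bX^\top\bX=\bI_p$ and $\bS^\top=\bS$,
\[
\Delta\bX^\top\bX+\bX^\top\Delta\bX=\tfrac12\bS^\top\bX^\top\bX+\tfrac12\bX^\top\bX\bS=\bS ,
\]
so $L_\bX$ is onto $\operatorname{Sym}(p)$ and hence $\operatorname{rank}(L_\bX)=p(p+1)/2$. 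Identifying $\operatorname{Sym}(p)$ with $\R^{p(p+1)/2}$ by half-vectorization, the Jacobian has $p(p+1)/2$ rows and $np$ columns and its rank equals the row count, so it has full row rank; the inequality $np\ge p(p+1)/2$ needed for this to be meaningful holds because $p<n$ forces $n\ge p+1$.

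I do not expect a genuine obstacle here once the surjectivity witness $\Delta\bX=\tfrac12\bX\bS$ is written down. The only point that needs care — and that I would state explicitly — is the bookkeeping distinction between the full vectorization $\vect(\bC)\in\R^{p^2}$ and the reduced parametrization of the symmetric matrix $\bC$: with the full $\vect$ the Jacobian is $p^2\times np$ and still has rank $p(p+1)/2$, so ``full row rank'' in the statement refers to the reduced $\tfrac{p(p+1)}{2}\times np$ Jacobian obtained by keeping only the algebraically independent entries of $\bC$. Everything else is a one-line computation.
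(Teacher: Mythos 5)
Your proof is correct, but it takes a genuinely different route from the paper's. The paper argues via the kernel: it identifies the null space of $\bJ_{\bg}$ with the set of tangent vectors $\{\bV\in\R^{n\times p} : \bX^{\top}\bV + \bV^{\top}\bX = 0\}$ and then applies rank--nullity. You argue via the image: you note that the differential $\Delta\bX\mapsto\Delta\bX^{\top}\bX+\bX^{\top}\Delta\bX$ lands in $\operatorname{Sym}(p)$ and exhibit the explicit preimage $\Delta\bX=\tfrac12\bX\bS$ of an arbitrary symmetric $\bS$, so surjectivity gives the rank $\tfrac{p(p+1)}{2}$ in one line. Your route buys robustness: it avoids having to count the nullity at all, and in fact sidesteps a slip in the paper's bookkeeping, where the nullity is quoted as $\tfrac{p(p-1)}{2}$ (the dimension of the skew-symmetric matrices $\bX^{\top}\bV$) rather than the dimension of the full null space $\{\bV : \bX^{\top}\bV \text{ skew-symmetric}\}$, which is $\tfrac{p(p-1)}{2}+p(n-p)=np-\tfrac{p(p+1)}{2}$ because the component of $\bV$ orthogonal to the column space of $\bX$ is unconstrained; with the correct nullity, rank--nullity gives the same answer $\tfrac{p(p+1)}{2}$, but your surjectivity argument never needs this count. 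Your explicit remark distinguishing the full $p^{2}\times np$ vectorized Jacobian (of rank $\tfrac{p(p+1)}{2}$) from the reduced $\tfrac{p(p+1)}{2}\times np$ Jacobian of the independent entries of the symmetric matrix $\bC$ also makes precise what the paper states only implicitly by saying there are $\tfrac{p(p+1)}{2}$ independent constraints. What the paper's kernel-based approach buys in exchange is the explicit description of the null space as the tangent space $T_\bX{\rm St}(n,p)$, which is the object reused in the asymptotic stability analysis (Theorem \ref{Theorem:AsymConstr}); your argument yields the rank but not that description directly.
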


\begin{proof}

Differentiating the constraints gives
\[
d\mathbf{C} = d(\mathbf{X}^\top\mathbf{X}) = (d\mathbf{X})^\top\mathbf{X} + \mathbf{X}^\top d\mathbf{X}.
\]
Vectorizing both sides and using the property $\operatorname{vec}(A^\top) =  \bN_{m,n}\operatorname{vec}(A)$ for $A \in \mathbb{R}^{m \times n}$ (where $ \bN_{m,n}$ is the commutation matrix), we obtain:
\[
\operatorname{vec}(d\mathbf{C}) = \underbrace{\left( (\mathbf{X}^\top \otimes \mathbf{I}_p)  \bN_{n,p} + (\mathbf{I}_p \otimes \mathbf{X}^\top) \right)}_{\mathbf{J}_{\mathbf{g}}} \operatorname{vec}(d\mathbf{X}).
\]
The null space of the Jacobian $\mathbf{J}_{\mathbf{g}}$ consists of tangent vectors to the Stiefel manifold at $\mathbf{X}$. From the differentiating we see that these are matrices $\mathbf{V} \in \mathbb{R}^{n \times p}$ satisfying
\[
\mathbf{X}^\top\mathbf{V} + \mathbf{V}^\top\mathbf{X} = \mathbf{0}
\]
which implies that $\mathbf{X}^\top\mathbf{V}$ is skew-symmetric. The dimension of skew symmetric matrices  is $\frac{p(p - 1)}{2}$ and thus, by the rank-nullity theorem,
\[
\operatorname{rank}(\mathbf{J}_{\mathbf{g}}) = np - \dim(\nul(\mathbf{J}_{\mathbf{g}})) = np - \frac{p(p - 1)}{2} = \frac{p(p + 1)}{2}.
\]
Finally, since $\mathbf{C}(\mathbf{X})$ is symmetric, there are $\frac{p(p + 1)}{2}$ independent constraints. Thus $\mathbf{J}_{\mathbf{g}}$ has $\frac{p(p + 1)}{2}$ rows and $np$ columns.
\end{proof}

\section{The Procrustes Problem using the Lagrange Formulation}

Consider the unbalanced Procrustes problem, see \cite{EldenPark1999}, with
$$
F(\bX) = \dfrac{1}{2} \| \bA \bX - \bB \|_F^2, \ \bG(\bX) = \bA^{\top}(\bA  \bX - \bB)
$$
and
\begin{eqnarray}
\ddot{\bX} + \eta \dot{\bX} = -\bG - \bX \bM,\label{ProcDampSys1}\\
\bG = \bA^{\top}(\bA  \bX - \bB)\label{ProcDampSys2}\\
\bX^\top \bX \bM + \bM \bX^\top \bX = \mathbf{T}, \label{ProcDampSys3}\\
\bT = \bK\odot ( \bX^{\top} \bX- \bI)- \bG^{\top} \bX +2\dot{\bX}^{\top}\dot{\bX}- \bX^{\top} \bG.\label{ProcDampSys4}
\end{eqnarray}
At the stationary solution we have
$$
 \hat{\bM} = \dfrac{1}{2}\hat{\bT} =    -\dfrac{1}{2}(\hat{\bG}^{\top} \hat{\bX} + \hat{\bX}^{\top}\hat{\bG}),
$$
or since
$\hat{\bG}^{\top} \hat{\bX} = \hat{\bX}^{\top}\hat{\bG}$ , see \cite{BIRTEA2020102868},  
$$
\hat{\bM} =-\hat{\bG}^{\top} \hat{\bX} = -\hat{\bX}^{\top}\hat{\bG}.
$$

\subsection{Eigenvalues of the Jacobian and Asymptotic Stability}

We can do the same stability analysis here since $\bG$ is a linear function of $\bX$. The convergence will depend on the eigenvalues of $\bA^{\top} \bA$ and $\hat{\bM}$ and from \cite{EldenPark1999} we have that a second-order condition for a minimum is
\begin{equation}
\label{EldenParkProc}
\lambda_{j}(\bA^{\top} \bA) + \mu_i \geq 0, i=1, \ldots, p, j = p+1, \ldots, n.
\end{equation}
\begin{lemma}
The non-zero eigenvalues of $-\hat{\bX}\hat{\bG}^{\top}$ are the same as for $\hat{\bM}$.
\end{lemma}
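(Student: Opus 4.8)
The plan is to recognize the statement as an instance of the classical fact that for any $\bP\in\R^{n\times p}$ and $\bQ\in\R^{p\times n}$ the two products $\bP\bQ\in\R^{n\times n}$ and $\bQ\bP\in\R^{p\times p}$ have the same non-zero eigenvalues (with the same algebraic multiplicities). Here $\hat{\bX}\in\R^{n\times p}$ and $\hat{\bG}=\bA^{\top}(\bA\hat{\bX}-\bB)\in\R^{n\times p}$, so setting $\bP=-\hat{\bX}$ and $\bQ=\hat{\bG}^{\top}$ gives $\bP\bQ=-\hat{\bX}\hat{\bG}^{\top}$ and $\bQ\bP=-\hat{\bG}^{\top}\hat{\bX}$. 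By the already-established identity $\hat{\bM}=-\hat{\bG}^{\top}\hat{\bX}$ we have $\bQ\bP=\hat{\bM}$, and the fact above yields the claim immediately.

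To keep the note self-contained I would include the short argument for the auxiliary fact. For the set-level statement: if $\lambda\neq 0$ and $\bP\bQ\bv=\lambda\bv$ with $\bv\neq\bzeros$, put $\bw=\bQ\bv$; then $\bw\neq\bzeros$, since $\bw=\bzeros$ would force $\lambda\bv=\bP\bQ\bv=\bzeros$, contradicting $\lambda\neq 0$, $\bv\neq\bzeros$. Moreover $\bQ\bP\bw=\bQ(\bP\bQ\bv)=\lambda\bQ\bv=\lambda\bw$, so $\lambda$ is an eigenvalue of $\bQ\bP$. Exchanging the roles of $\bP$ and $\bQ$ gives the reverse inclusion, so $\bP\bQ$ and $\bQ\bP$ share the same set of non-zero eigenvalues. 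If the multiplicity version is wanted, I would instead quote the Weinstein--Aronszajn / Sylvester determinant identity in the form $\det(\lambda\bI_n-\bP\bQ)=\lambda^{\,n-p}\det(\lambda\bI_p-\bQ\bP)$, valid for all $\lambda$ by polynomial continuity, which shows the characteristic polynomials agree up to the factor $\lambda^{\,n-p}$ and hence every non-zero eigenvalue has the same algebraic multiplicity for both matrices.

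I do not expect a substantive obstacle. The only points requiring care are bookkeeping ones: getting the shapes and the sign right so that $\bQ\bP$ is literally $\hat{\bM}$ and not $-\hat{\bM}$ or $\hat{\bM}^{\top}$ (here it works out exactly because $\bQ\bP=-\hat{\bG}^{\top}\hat{\bX}=\hat{\bM}$), and deciding whether ``the same'' is meant set-wise or with multiplicity — I would prove the multiplicity version via the determinant identity to be safe. I would also remark in passing that, combined with the symmetry $\hat{\bG}^{\top}\hat{\bX}=\hat{\bX}^{\top}\hat{\bG}$ noted above, $\hat{\bM}$ is symmetric, so the shared non-zero eigenvalues of $-\hat{\bX}\hat{\bG}^{\top}$ are in fact real.
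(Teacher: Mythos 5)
Your proposal is correct, but it takes a different route from the paper. You invoke the general fact that $\bP\bQ$ and $\bQ\bP$ share the same non-zero eigenvalues (proved either by the eigenvector transfer $\bw=\bQ\bv$ or by the Weinstein--Aronszajn identity), applied with $\bP=-\hat{\bX}$, $\bQ=\hat{\bG}^{\top}$, so that $\bQ\bP=-\hat{\bG}^{\top}\hat{\bX}=\hat{\bM}$; the only paper-specific input you need is the identity $\hat{\bM}=-\hat{\bG}^{\top}\hat{\bX}$ established just before the lemma. The paper instead argues through a chain of eigen-equations for $\hat{\bX}\hat{\bG}^{\top}\bw=\gamma\bw$, which requires first proving that $\hat{\bX}\hat{\bG}^{\top}$ is symmetric at the stationary point; this is done by writing $\hat{\bG}=\hat{\bX}\bQ$ (using that $\hat{\bG}$ lies in the column space of $\hat{\bX}$ at a critical point) and showing $\bQ$ is symmetric, and most of the paper's proof is devoted to that symmetry step. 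Your argument is more elementary and more general: it needs no stationarity structure beyond the formula for $\hat{\bM}$, it automatically handles multiplicities if you use the determinant identity, and the transfer map $\bw\mapsto\hat{\bG}^{\top}\bw$ (rather than the paper's $\bw\mapsto\hat{\bX}^{\top}\bw$, whose nonvanishing for $\gamma\neq0$ the paper leaves implicit) makes the non-degeneracy of the eigenvector explicit. What the paper's route buys in exchange is the additional structural fact that $\hat{\bX}\hat{\bG}^{\top}$ is itself symmetric at the solution, which is of independent interest; in your version the reality of the shared non-zero eigenvalues is recovered only indirectly, via the symmetry of $\hat{\bM}$, as you note.
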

\begin{proof}
$$
 \hat{\bX}\hat{\bG}^{\top} \bw = \gamma \bw \Longleftrightarrow \hat{\bG}\hat{\bX}^{\top} \bw = \gamma \bw \Longleftrightarrow 
 \hat{\bX}^{\top}\hat{\bG}\hat{\bX}^{\top} \bw = \gamma \hat{\bX}^{\top}\bw  \Longleftrightarrow 
 \hat{\bX}^{\top}\hat{\bG}\bz = \gamma \bz  
$$
and we now have to show that $ \hat{\bX}\hat{\bG}^{\top}$ is symmetric.

Since \(\bG = \bX \bX^T \bG\) and  \(\bX \bX^T\) is a projection onto the column space of \(\bX\), there exists a $\bQ \in \mathbb{R}^{p\times p}$ such that
   \[
   \bG = \bX \bQ.
   \]
   In \(\bG^T \bX = \bX^T \bG\), insert \(\bG = \bX \bQ\) giving
   \[
   \bG^T \bX = \bQ^T \bX^T \bX = \bQ^T,
   \bX^T \bG = \bX^T \bX \bQ = \bQ
   \]
   showing that \(\bQ^T = \bQ\). Now compute \(\bX \bG^T\) and \(\bG \bX^T\) using \(\bG = \bX \bQ\) and \(\bQ^T = \bQ\), i.e., 
   \[
   \bX \bG^T = \bX \bQ^T \bX^T = \bX \bQ \bX^T,
   \bG \bX^T = \bX \bQ \bX^T
   \]
   which proves that \(\bX \bG^T = \bG \bX^T\).
\end{proof}
Following the analysis for the eigenvalue problem we define $\bS(\bY) = \bA \bY  + \bY \bM_{\bQ}(\bY) $
giving
\begin{align}
\bS(\hat{\bY} + \Delta \bY) - \bS(\hat{\bY}) = 
\bA^{\top}\bA\, \Delta \bY  + \Delta \bY\, \bOmega+ \\
\dfrac{1}{2}
(
\hat{\bY}\bQ^{\top}\left(\bK\odot \bQ( \, \Delta\bY^{\top}\,  \hat{\bY} +  \hat{\bY}^{\top}\,   \, \Delta\bY) \bQ^{\top} \right)\bQ- \\
 \hat{\bY}  \, \Delta \bY^{\top}\bA^{\top}\bA \hat{\bY} -
   \hat{\bY} (\hat{\bY} \bOmega)^{\top}  \, \Delta\bY  -
  \hat{\bY}\, \, \Delta \bY ^{\top} \hat{\bY} \bOmega   -  
 \hat{\bY}\hat{\bY}^{\top} \bA^{\top}\bA\, \Delta \bY\\
-   \hat{\bY}\bOmega \, \Delta \bY^{\top} \hat{\bY} -  \hat{\bY}\bOmega \hat{\bY}^{\top}   \, \Delta \bY  -   \hat{\bY}\, \Delta \bY^{\top}\hat{\bY} \bOmega -  \hat{\bY}\hat{\bY}^{\top}\, \Delta \bY \bOmega 
) .
\end{align}

\begin{lemma}
\label{KronLemma2}
Kronecker rules applied
$$
\begin{array}{l}
 \vect(\bA^{\top}\bA\, \Delta \bY) = (\bI \otimes \bA^{\top}\bA) \vect( \Delta \bY)\\
 \vect(\Delta \bY\, \bOmega) = (\bOmega \otimes \bI)  \vect( \Delta \bY)\\
 \vect(\hat{\bY}\, \Delta \bY^{\top}\bA^{\top}\bA \hat{\bY} ) = 
( \hat{\bY}^{\top}\bA^{\top}\bA   \otimes \hat{\bY})\bN \vect( \Delta \bY)\\
\vect(\hat{\bY}\bOmega \, \Delta \bY^{\top} \hat{\bY}) = (\bY^{\top} \otimes  \hat{\bY}\bOmega)\bN \vect( \Delta \bY)\\
\vect(\hat{\bY}\bOmega \hat{\bY}^{\top}   \, \Delta \bY) =(\bI \otimes  \hat{\bY}\bOmega \hat{\bY}^{\top} ) \vect( \Delta \bY)\\
\vect(\hat{\bY}\, \Delta \bY^{\top}\hat{\bY} \bOmega) = 
(\Omega \hat{\bY}^{\top}  \otimes \hat{\bY})\bN \vect( \Delta \bY)
\\
\vect(\hat{\bY}\hat{\bY}^{\top}\, \Delta \bY \bOmega) = (\Omega \otimes \hat{\bY}\hat{\bY}^{\top})\vect( \Delta \bY)\\
\vect(\hat{\bY}^{\top}\bA^{\top}\bA \, \Delta \bY) = (\bI \otimes \hat{\bY}\hat{\bY}^{\top}\bA)\vect( \Delta \bY)\\
\vect(\hat{\bY} \, \Delta \bY^{\top} \hat{\bY}) = (\bY^{\top} \otimes  \hat{\bY})\bN \vect( \Delta \bY)\\
\vect(\hat{\bY}\bY^{\top} \, \Delta  \hat{\bY}) = ( \bI \otimes  \hat{\bY}\bY^{\top}) \vect( \Delta \bY)
\end{array}
$$
and with the eigenvector $\mathbf{u}_i \otimes \mathbf{v}_j$
$$
\begin{array}{l}
 (\bI \otimes \bA^{\top}\bA)(\mathbf{u}_i \otimes \mathbf{v}_j) = \lambda_j(\mathbf{u}_i \otimes \mathbf{v}_j),\ i=1,\ldots, p,j=1,\ldots, n\\
 (\bOmega \otimes \bI)  (\mathbf{u}_i \otimes \mathbf{v}_j) =\mu_i(\mathbf{u}_i \otimes \mathbf{v}_j),\ ,i=1,\ldots, p,j=1,\ldots, n\\
 ( \hat{\bY}^{\top}\bA^{\top}\bA   \otimes \hat{\bY})\bN (\mathbf{u}_i \otimes \mathbf{v}_j) =
 \lambda_j (\mathbf{u}_j \otimes \mathbf{v}_i),\ i=1,\ldots, p,j=1,\ldots, p\\
 (\bY^{\top} \otimes  \hat{\bY}\bOmega)\bN(\mathbf{u}_i \otimes \mathbf{v}_j) =\mu_i (\mathbf{u}_j \otimes \mathbf{v}_i),\ i=1,\ldots, p,j=1,\ldots, p\\
(\bI \otimes  \hat{\bY}\bOmega \hat{\bY}^{\top} ) (\mathbf{u}_i \otimes \mathbf{v}_j) =\mu_j (\mathbf{u}_i \otimes \mathbf{v}_j),\ i=1,\ldots, p,j=1,\ldots, p\\
(\Omega \hat{\bY}^{\top}  \otimes \hat{\bY})\bN(\mathbf{u}_i \otimes \mathbf{v}_j) =\mu_j (\mathbf{u}_j \otimes \mathbf{v}_i),\ i=1,\ldots, p,j=1,\ldots, p
\\
 (\Omega \otimes \hat{\bY}\hat{\bY}^{\top})(\mathbf{u}_i \otimes \mathbf{v}_j) =\mu_i (\mathbf{u}_i \otimes \mathbf{v}_j),\ i=1,\ldots, p,j=1,\ldots, p\\
(\bI \otimes \hat{\bY}\hat{\bY}^{\top}\bA^{\top}\bA)(\mathbf{u}_i \otimes \mathbf{v}_j) =\lambda_j (\mathbf{u}_i \otimes \mathbf{v}_j),\ i=1,\ldots, p,j=1,\ldots, p\\
 (\bY^{\top} \otimes  \hat{\bY})\bN (\mathbf{u}_i \otimes \mathbf{v}_j) =(\mathbf{u}_j \otimes \mathbf{v}_i),\ i=1,\ldots, p,j=1,\ldots, p\\
 ( \bI \otimes  \hat{\bY}\bY^{\top})(\mathbf{u}_i \otimes \mathbf{v}_j) =(\mathbf{u}_i \otimes \mathbf{v}_j),\ i=1,\ldots, p,j=1,\ldots, p
\end{array}
$$
and zero otherwise where
$$
(\mathbf{u}_j \otimes \mathbf{v}_i)^{\top}(\mathbf{u}_i \otimes \mathbf{v}_j) = \delta_{ij},
$$
i.e., $(\mathbf{u}_j \otimes \mathbf{v}_i)$ and $(\mathbf{u}_i \otimes \mathbf{v}_j)$ are orthonormal.
\end{lemma}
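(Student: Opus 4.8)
The plan is to prove the lemma in two stages, mirroring exactly the treatment of Lemma~\ref{KronLemma} for the eigenvalue problem, with the matrix $\bA$ there replaced throughout by $\bA^{\top}\bA$, and with one extra product ($\hat{\bY}\,\Delta\bY^{\top}\bA^{\top}\bA\hat{\bY}$) coming from the non-linear Procrustes gradient.

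\textbf{Stage 1: the vectorization identities.} Each of the nine expressions appearing in the perturbation $\bS(\hat{\bY}+\Delta\bY)-\bS(\hat{\bY})$ displayed just above is a product of three matrices whose middle factor is $\Delta\bY$ or $\Delta\bY^{\top}$. I would match each term to one of the three rules of Lemma~\ref{KronLemma0}: for a product $\bE\,\Delta\bY\,\bH$ apply $\vect(\bE\bF\bH)=(\bH^{\top}\otimes\bE)\vect(\bF)$ with $\bF=\Delta\bY$; for a product $\bE\,\Delta\bY^{\top}\,\bH$ apply $\vect(\bE\bF^{\top}\bH)=(\bH^{\top}\otimes\bE)\bN\vect(\bF)$; the identity $(\bE\otimes\bF)\bN=\bN(\bF\otimes\bE)$ is used only to bring two of the operators into the displayed form. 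Reading off $\bE$ and $\bH$ in each case (for instance $\bE=\hat{\bY}$, $\bH=\bA^{\top}\bA\hat{\bY}$ for the new term $\hat{\bY}\,\Delta\bY^{\top}\bA^{\top}\bA\hat{\bY}$) reproduces the first table verbatim. This stage is entirely mechanical; the only care needed is consistency of the size of the commutation matrix $\bN$ with the shapes of the factors.

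\textbf{Stage 2: the action on $\mathbf{u}_i\otimes\mathbf{v}_j$.} I would fix the eigenbasis: since $\hat{\bM}_{\bQ}=\hat{\bOmega}$ is diagonal, its eigenvectors are the coordinate vectors $\mathbf{u}_i=\be_i$; the $\mathbf{v}_j$ are an orthonormal eigenbasis of $\bA^{\top}\bA$ with $\bA^{\top}\bA\mathbf{v}_j=\lambda_j\mathbf{v}_j$, ordered so that the columns of $\hat{\bY}$ are $\mathbf{v}_1,\dots,\mathbf{v}_p$, whence $\hat{\bY}\,\be_i=\mathbf{v}_i$, $\hat{\bY}^{\top}\mathbf{v}_j=\be_j$ for $j\le p$ and $\hat{\bY}^{\top}\mathbf{v}_j=\bzeros$ for $j>p$, and $\hat{\bY}\hat{\bY}^{\top}$ is the orthogonal projector onto $\operatorname{span}\{\mathbf{v}_1,\dots,\mathbf{v}_p\}$. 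Writing the rank-one perturbation $\Delta\bY=\mathbf{v}_j\mathbf{u}_i^{\top}$ gives $\vect(\Delta\bY)=\mathbf{u}_i\otimes\mathbf{v}_j$ and $\vect(\Delta\bY^{\top})=\bN(\mathbf{u}_i\otimes\mathbf{v}_j)=\mathbf{v}_j\otimes\mathbf{u}_i$, so each Kronecker operator $(\bP\otimes\bR)$ (possibly post-composed with $\bN$) acts by $(\bP\otimes\bR)(\mathbf{a}\otimes\mathbf{b})=\bP\mathbf{a}\otimes\bR\mathbf{b}$; substituting the eigen-relations above together with the index swap produced by $\bN$ yields each entry of the second table, the ``zero otherwise'' cases coming precisely from $\hat{\bY}^{\top}\mathbf{v}_j=\bzeros$ when $j>p$. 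Orthonormality of $\{\mathbf{u}_j\otimes\mathbf{v}_i\}$ and $\{\mathbf{u}_i\otimes\mathbf{v}_j\}$ follows from that of the $\mathbf{u}$'s and $\mathbf{v}$'s via $(\mathbf{a}\otimes\mathbf{b})^{\top}(\mathbf{c}\otimes\mathbf{d})=(\mathbf{a}^{\top}\mathbf{c})(\mathbf{b}^{\top}\mathbf{d})$.

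\textbf{Main obstacle.} Stage~1 is routine. In Stage~2 the delicate point — and the only place the Procrustes case departs substantively from the eigenvalue case — is justifying that one may choose the eigenbasis $\{\mathbf{v}_j\}$ of $\bA^{\top}\bA$ so that its first $p$ members are exactly the columns of $\hat{\bY}$, i.e.\ that at the stationary point the minimizer aligns with the eigenvectors of $\bA^{\top}\bA$; this is immediate for the eigenvalue problem, where $\bA\hat{\bY}=-\hat{\bY}\hat{\bOmega}$ with $\hat{\bOmega}$ diagonal, and for Procrustes it is what allows the reduced-Hessian analysis to collapse to the scalar quantities $\lambda_j$ and $\mu_i$. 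A secondary source of error to watch is the bookkeeping of which index lands in which Kronecker slot after each application of $\bN$; organizing the computation around the single identity $\bN(\mathbf{u}_i\otimes\mathbf{v}_j)=\mathbf{v}_j\otimes\mathbf{u}_i$ keeps this under control.
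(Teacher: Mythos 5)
Your Stage 1 is exactly what the paper does: the first block of identities follows by mechanically matching each term to one of the rules of Lemma~\ref{KronLemma0}, and the paper offers nothing beyond that (the lemma is presented as ``Kronecker rules applied'' with no separate proof, and the stability theorem that uses it is omitted as ``almost identical'' to Theorem~\ref{AsymThmEig}). So on the vectorization identities you and the paper coincide.

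The genuine gap is the point you flag and then leave unresolved, and it is not merely delicate: the entire second table needs not only $\hat{\bOmega}\mathbf{u}_i=\mu_i\mathbf{u}_i$ and $\bA^{\top}\bA\mathbf{v}_j=\lambda_j\mathbf{v}_j$, but also that the columns of $\hat{\bY}$ coincide with $\mathbf{v}_1,\dots,\mathbf{v}_p$, so that $\hat{\bY}\mathbf{u}_i=\mathbf{v}_i$, $\hat{\bY}^{\top}\mathbf{v}_j=\mathbf{u}_j$ for $j\le p$ and $\hat{\bY}^{\top}\mathbf{v}_j=\bzeros$ for $j>p$. For instance, $(\hat{\bY}^{\top}\bA^{\top}\bA\otimes\hat{\bY})\bN(\mathbf{u}_i\otimes\mathbf{v}_j)=(\hat{\bY}^{\top}\bA^{\top}\bA\mathbf{v}_j)\otimes(\hat{\bY}\mathbf{u}_i)$ equals $\lambda_j(\mathbf{u}_j\otimes\mathbf{v}_i)$ only under that alignment. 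For the eigenvalue problem the alignment is immediate from $\bA\hat{\bY}=-\hat{\bY}\hat{\bOmega}$, but for the unbalanced Procrustes problem the stationarity condition is $\bA^{\top}\bA\hat{\bX}+\hat{\bX}\hat{\bM}=\bA^{\top}\bB$, which does not make $\mathrm{col}(\hat{\bX})$ an invariant subspace of $\bA^{\top}\bA$ for general $\bB$; the premise you defer to ``the reduced-Hessian analysis'' fails in general and cannot be imported by analogy with the eigenvalue case. Your proposal therefore establishes the first table but not the second; to finish Stage 2 you would have to either add an explicit hypothesis forcing the columns of $\hat{\bY}$ to be eigenvectors of $\bA^{\top}\bA$ (special $\bB$), or redo the computation in a basis adapted to $\hat{\bY}$ without assuming it diagonalizes $\bA^{\top}\bA$ and carry the resulting cross terms. (To be fair, the paper leans silently on the same unstated assumption, which is precisely why it gives no proof for this lemma.)
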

By using (\ref{EldenParkProc})
we get the following theorem which has a proof almost identical to the one for Theorem \ref{AsymThmEig} and is therefore omitted.
\begin{theorem}
The dynamical system (\ref{ProcDampSys1})-(\ref{ProcDampSys4}) with $\bK_{ij} = \nu$ is asymptotically
stable.
\end{theorem}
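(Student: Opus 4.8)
The plan is to reproduce, essentially verbatim, the argument used for Theorem~\ref{AsymThmEig}, with the matrix $\bA$ there replaced by $\bA^{\top}\bA$ here. First I would apply the similarity transformation $\hat{\bM}=\bQ\hat{\bOmega}\bQ^{\top}$, $\hat{\bOmega}=\mathrm{diag}(\mu_{1},\ldots,\mu_{p})$ with $\mu_{1}\ge\cdots\ge\mu_{p}$, and pass to the variables $\bY=\bX\bQ$, $\bM_{\bQ}=\bQ^{\top}\bM\bQ$, $\bT_{\bQ}=\bQ^{\top}\bT\bQ$, which leaves (\ref{ProcDampSys1})--(\ref{ProcDampSys4}) in the same form but with $\hat{\bM}_{\bQ}=\hat{\bOmega}$. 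Since $\bG(\bX)=\bA^{\top}(\bA\bX-\bB)$ is affine in $\bX$, its differential is $\mathrm{d}\bG(\Delta\bX)=\bA^{\top}\bA\,\Delta\bX$, so $\bA^{\top}\bA$ plays exactly the role $\bA$ plays in Section~\ref{sec:LinEig}. By the reduction in Section~\ref{SysSec} it then suffices to show that the Jacobian $\bJ(\hat{\bY})$ of $\bS(\bY)=\bA^{\top}\bA\,\bY+\bY\bM_{\bQ}(\bY)$ at the stationary point has only nonnegative eigenvalues: the full system Jacobian has eigenvalues $\beta_{i}=-\tfrac{\eta}{2}\pm\sqrt{\tfrac{\eta^{2}}{4}-\alpha_{i}}$, and $\alpha_{i}\ge 0$ forces $\Re\beta_{i}\le 0$.

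Next I would linearise $\bS$ around $\hat{\bY}$, which gives exactly the displayed expansion of $\bS(\hat{\bY}+\Delta\bY)-\bS(\hat{\bY})$ obtained just above; setting $\bK_{ij}=\nu$ collapses the Hadamard term to $\tfrac{\nu}{2}\hat{\bY}(\Delta\bY^{\top}\hat{\bY}+\hat{\bY}^{\top}\Delta\bY)$. Vectorising term by term with Lemma~\ref{KronLemma2} and evaluating on the orthonormal family $\mathbf{u}_{i}\otimes\mathbf{v}_{j}$ (with $\mathbf{u}_{i}$ the canonical basis vectors and $\mathbf{v}_{j},\lambda_{j}$ the eigenpairs of $\bA^{\top}\bA$) yields $\bJ(\hat{\bY})(\mathbf{u}_{i}\otimes\mathbf{v}_{j})$ as a linear combination of $\mathbf{u}_{i}\otimes\mathbf{v}_{j}$ and the swapped vector $\mathbf{u}_{j}\otimes\mathbf{v}_{i}$, with coefficients built from $\lambda_{j},\mu_{i},\mu_{j},\nu$ — the exact analogue of Lemma~\ref{SeparateJhat}. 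In particular the $\lambda_{j}$ contributions of $(\bI\otimes\bA^{\top}\bA)$ and of $(\bI\otimes\hat{\bY}\hat{\bY}^{\top}\bA^{\top}\bA)$ cancel on $\mathrm{range}(\hat{\bY})$.

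Then, writing $\bw=\sum_{ij}\alpha_{ij}(\mathbf{u}_{i}\otimes\mathbf{v}_{j})$ with $\sum\alpha_{ij}^{2}=1$, I would compute the Rayleigh quotient: the swapped terms pair into squares and one obtains
\[ \bw^{\top}\bJ(\hat{\bY})\bw = \sum_{i=1}^{p}\sum_{j=p+1}^{n}\alpha_{ij}^{2}(\lambda_{j}+\mu_{i}) + \sum_{i=1}^{p}\sum_{j=1}^{p}\alpha_{ij}^{2}(\nu-\mu_{i}-\mu_{j}). \]
Concentrating all weight on the smallest coefficient gives $\min_{\|\bw\|=1}\bw^{\top}\bJ(\hat{\bY})\bw=\min\{\lambda^{*}+\mu_{p},\ \nu-2\mu_{1}\}$ with $\lambda^{*}=\min_{p+1\le j\le n}\lambda_{j}(\bA^{\top}\bA)$. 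Here $\lambda^{*}+\mu_{p}\ge 0$ is exactly the second-order optimality condition (\ref{EldenParkProc}) applied with the index $i$ for which $\mu_{i}=\mu_{p}$, so this term is nonnegative at a minimiser. Hence $\bJ(\hat{\bY})$ is positive semidefinite and, by Section~\ref{SysSec}, the system is asymptotically stable, the vanishing eigenvalues corresponding to the gauge directions inside $\mathrm{range}(\hat{\bX})$ that are removed by the $\bC$-dynamics, exactly as in Theorem~\ref{AsymThmEig}.

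The step I expect to be the real obstacle is controlling the remaining term $\nu-2\mu_{1}$. For the eigenvalue problem this is automatic because $\hat{\bM}=-\hat{\bX}^{\top}\bA\hat{\bX}$ is negative definite, but for the Procrustes problem $\hat{\bM}=-\hat{\bX}^{\top}\hat{\bG}$ need not be negative semidefinite, so one must either take $\nu$ large enough that $\nu\ge 2\mu_{1}$ (which the hypothesis ``$\bK_{ij}=\nu$, $\nu>0$'' should be read as permitting) or supply an extra argument that the multipliers along $\mathrm{range}(\hat{\bX})$ are non-positive at the minimiser. I would also recheck the Kronecker bookkeeping in Lemma~\ref{KronLemma2}, since, unlike the eigenvalue case, $\hat{\bX}$ is not $\bA^{\top}\bA$-invariant, so the identities involving $\hat{\bY}\hat{\bY}^{\top}\bA^{\top}\bA$ must be read in the transformed basis adapted to $\hat{\bY}$.
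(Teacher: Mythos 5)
Your proposal follows exactly the route the paper intends: the paper omits this proof, stating it is almost identical to that of Theorem~\ref{AsymThmEig} with $\bA$ replaced by $\bA^{\top}\bA$ and with the second-order condition (\ref{EldenParkProc}) supplying $\lambda^{*}+\mu_p\ge 0$, which is precisely the argument you reconstruct (same transformation, same linearization of $\bS$, Lemma~\ref{KronLemma2}, same Rayleigh-quotient minimization). Your closing caveat about the term $\nu-2\mu_1$ is well taken: unlike the eigenvalue case, for Procrustes $\hat{\bM}=-\hat{\bX}^{\top}\hat{\bG}$ need not be negative semidefinite, so nonnegativity of the $\nu-\mu_i-\mu_j$ block requires $\nu$ sufficiently large (or an additional argument about the sign of the multipliers), a point the paper's omitted proof passes over silently.
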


\section{The Procrustes Problem using the Projection Formulation}

Define
$$
\bS_{\bG} = (\bI - \bX \bX^{\top})\bG
$$
and make the perturbation giving 
$$
\Delta \bS_{\bG} =\Delta \bG  -  \Delta\bX \, \hat{\bX}^{\top}\hat{\bG}  - \hat{\bX} \, \Delta\bX^{\top}\hat{\bG}    -
\hat{\bX}\hat{\bX}^{\top} \, \Delta \bG,\ \Delta \bG = \bA^{\top} \bA\,  \Delta\bX. 
$$
Assume that 
$$
\Delta \bX  = (\bI -  \hat{\bX}\hat{\bX}^{\top})\Delta \bZ
$$
we get
$$
\Delta \bS_{\bG} =\Delta \bG  -  (\bI -  \hat{\bX}\hat{\bX}^{\top})\Delta \bZ \, \hat{\bX}^{\top}\hat{\bG}  - \hat{\bX} \, \Delta \bZ^{\top}  (\bI -  \hat{\bX}\hat{\bX}^{\top})\hat{\bG}  -
\hat{\bX}\hat{\bX}^{\top} \, \Delta \bG 
$$
or since $ (\bI -  \hat{\bX}\hat{\bX}^{\top})\hat{\bG} = \bzeros$
$$
\Delta \bS_{\bG} =\Delta \bG  -  (\bI -  \hat{\bX}\hat{\bX}^{\top})\Delta \bZ \, \hat{\bX}^{\top}\hat{\bG}    -
\hat{\bX}\hat{\bX}^{\top} \, \Delta \bG.
$$
Inserting $\bA^{\top} \bA\,  \Delta\bX$ we have 
$$
\Delta \bS_{\bG}  =\bA^{\top} \bA\,  \Delta\bX  -  (\bI -  \hat{\bX}\hat{\bX}^{\top})\Delta \bZ \, \hat{\bX}^{\top}\hat{\bG}    -
\hat{\bX}\hat{\bX}^{\top} \, \bA^{\top} \bA\,  \Delta\bX
$$
and with $\Delta \bX  = (\bI -  \hat{\bX}\hat{\bX}^{\top})\Delta \bZ$ 
$$
\Delta \bS_{\bG} =\bA^{\top} \bA\,  (\bI -  \hat{\bX}\hat{\bX}^{\top})\Delta \bZ  -  (\bI -  \hat{\bX}\hat{\bX}^{\top})\Delta \bZ \, \hat{\bX}^{\top}\hat{\bG}    -
\hat{\bX}\hat{\bX}^{\top} \, \bA^{\top} \bA\,  (\bI -  \hat{\bX}\hat{\bX}^{\top})\Delta \bZ
$$
or collecting terms
$$
\Delta \bS_{\bG} =(\bI -  \hat{\bX}\hat{\bX}^{\top})\bA^{\top} \bA\,  (\bI -  \hat{\bX}\hat{\bX}^{\top})\Delta \bZ  -  (\bI -  \hat{\bX}\hat{\bX}^{\top})\Delta \bZ \, \hat{\bX}^{\top}\hat{\bG}    
$$
with the corresponding Jacobian
$$
\bJ(\hat{\bX}) =  (\bI \otimes (\bI -  \hat{\bX}\hat{\bX}^{\top})\bA^{\top} \bA\,  (\bI -  \hat{\bX}\hat{\bX}^{\top}))   - 
(\hat{\bX}^{\top}\hat{\bG} \otimes  (\bI -  \hat{\bX}\hat{\bX}^{\top})).
$$
Note that if $\hat{\bX}^{\top}\hat{\bG}=\hat{\bG}^{\top}\hat{\bX}$ we get $\hat{\bX}^{\top}\hat{\bG} = -\hat{\bM}$ and
$$
\bJ(\hat{\bX}) =  (\bI \otimes (\bI -  \hat{\bX}\hat{\bX}^{\top})\bA^{\top} \bA\,  (\bI -  \hat{\bX}\hat{\bX}^{\top}))   +
(\hat{\bM} \otimes  (\bI -  \hat{\bX}\hat{\bX}^{\top})).
$$

\begin{lemma}
Let $\hat{\mathbf{X}}$ be a real $n \times p$ matrix ($n > p$) with orthonormal columns, $\mathbf{A}$ a real $m \times n$ matrix, and $\hat{\mathbf{M}}$ a symmetric real $p \times p$ matrix. Define the matrix $\mathbf{J}(\hat{\mathbf{X}})$ as:
\[
\mathbf{J}(\hat{\mathbf{X}}) = \left(\mathbf{I}_p \otimes (\mathbf{I}_n - \hat{\mathbf{X}}\hat{\mathbf{X}}^\top) \mathbf{A}^\top \mathbf{A} (\mathbf{I}_n - \hat{\mathbf{X}}\hat{\mathbf{X}}^\top)\right) + \left(\hat{\mathbf{M}} \otimes (\mathbf{I}_n - \hat{\mathbf{X}}\hat{\mathbf{X}}^\top)\right),
\]
where $\otimes$ denotes the Kronecker product. Then, the eigenvalues of $\mathbf{J}(\hat{\mathbf{X}})$ are:

\begin{enumerate}
    \item $\lambda_i + \mu_j \geq 0$ for all $i = 1, \dots, p$ and $j = 1, \dots, r$, where:
    \begin{itemize}
        \item $\lambda_i$ are the eigenvalues of $\hat{\mathbf{M}}$,
        \item $\mu_j$ are the non-zero eigenvalues of $(\mathbf{I}_n - \hat{\mathbf{X}}\hat{\mathbf{X}}^\top) \mathbf{A}^\top \mathbf{A} (\mathbf{I}_n - \hat{\mathbf{X}}\hat{\mathbf{X}}^\top)$,
        \item $r = \operatorname{rank}\left((\mathbf{I}_n - \hat{\mathbf{X}}\hat{\mathbf{X}}^\top) \mathbf{A}^\top \mathbf{A} (\mathbf{I}_n - \hat{\mathbf{X}}\hat{\mathbf{X}}^\top)\right)$.
    \end{itemize}
    \item Zero with multiplicity $p \times (n - r)$.
\end{enumerate}
\end{lemma}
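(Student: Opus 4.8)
The plan is to work directly with the Kronecker structure. Write $\bP_\perp=\bI_n-\hat{\bX}\hat{\bX}^{\top}$, the orthogonal projection onto $\mathrm{range}(\hat{\bX})^{\perp}$, and set $\bR=\bP_\perp\bA^{\top}\bA\,\bP_\perp$, so that $\mathbf{J}(\hat{\bX})=(\bI_p\otimes\bR)+(\hat{\bM}\otimes\bP_\perp)$. First I would record the facts that drive the argument: $\bR$ is symmetric positive semidefinite; $\mathrm{range}(\bR)\subseteq\mathrm{range}(\bP_\perp)$, so every eigenvector of $\bR$ with nonzero eigenvalue lies in $\mathrm{range}(\bP_\perp)$ and is fixed by $\bP_\perp$; and $\bR\hat{\bX}=\bzeros$, so $\mathrm{range}(\hat{\bX})\subseteq\nul(\bR)$. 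Assuming, as in the unbalanced Procrustes setting, that $\bA$ has full column rank, one has $\bw^{\top}\bR\bw=\|\bA\bw\|^{2}>0$ for every nonzero $\bw\in\mathrm{range}(\bP_\perp)$; thus $\bR$ is positive definite on $\mathrm{range}(\bP_\perp)$, which forces $\nul(\bR)=\mathrm{range}(\hat{\bX})$ and $r=\mathrm{rank}(\bR)=n-p$. Since $\mathbf{J}(\hat{\bX})$ is a sum of Kronecker products of symmetric matrices it is symmetric, so it is enough to exhibit an orthonormal eigenbasis.

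Next I would diagonalize the two factors and assemble Kronecker eigenvectors. Let $\hat{\bM}=\sum_{i=1}^{p}\lambda_i\mathbf{u}_i\mathbf{u}_i^{\top}$ with $\{\mathbf{u}_i\}$ orthonormal in $\R^{p}$, and let $\{\mathbf{v}_1,\dots,\mathbf{v}_n\}$ be an orthonormal eigenbasis of $\bR$ ordered so that $\bR\mathbf{v}_j=\mu_j\mathbf{v}_j$ with $\mu_1,\dots,\mu_r>0$ and $\mathbf{v}_{r+1},\dots,\mathbf{v}_n$ an orthonormal basis of $\nul(\bR)=\mathrm{range}(\hat{\bX})$. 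The $np$ vectors $\mathbf{u}_i\otimes\mathbf{v}_j$ form an orthonormal basis of $\R^{np}$, and using $\bP_\perp\mathbf{v}_j=\mathbf{v}_j$ when $j\le r$,
\[
\mathbf{J}(\hat{\bX})(\mathbf{u}_i\otimes\mathbf{v}_j)=\mathbf{u}_i\otimes\bR\mathbf{v}_j+\hat{\bM}\mathbf{u}_i\otimes\bP_\perp\mathbf{v}_j=(\lambda_i+\mu_j)\,(\mathbf{u}_i\otimes\mathbf{v}_j),\qquad j\le r,
\]
whereas for $j>r$ we have $\bR\mathbf{v}_j=\bzeros$ and $\bP_\perp\mathbf{v}_j=\bzeros$, so $\mathbf{J}(\hat{\bX})(\mathbf{u}_i\otimes\mathbf{v}_j)=\bzeros$. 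This yields $pr$ eigenvalues of the form $\lambda_i+\mu_j$ and $p(n-r)=p^{2}$ zero eigenvalues; since together they exhaust the $np$-dimensional space and $\mathbf{J}(\hat{\bX})$ is symmetric, this is the full spectrum.

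It remains to check that the eigenvalues $\lambda_i+\mu_j$ are non-negative, and this is the step where the optimality of $\hat{\bX}$ has to be used — for a generic symmetric $\hat{\bM}$ they need not be. For $j\le r$ one has $\mu_j=\mathbf{v}_j^{\top}\bR\mathbf{v}_j=\|\bA\mathbf{v}_j\|^{2}\ge\lambda_n(\bA^{\top}\bA)$, the smallest eigenvalue of $\bA^{\top}\bA$ (eigenvalues in decreasing order); combined with the second-order optimality condition (\ref{EldenParkProc}) at index $j=n$, namely $\lambda_n(\bA^{\top}\bA)+\lambda_i\ge0$ for each eigenvalue $\lambda_i$ of $\hat{\bM}$ (written $\mu_i$ in (\ref{EldenParkProc})), this gives $\lambda_i+\mu_j\ge0$; a Cauchy interlacing argument comparing the compression of $\bA^{\top}\bA$ to $\mathrm{range}(\bP_\perp)$ with $\bA^{\top}\bA$ itself would even recover (\ref{EldenParkProc}) index by index, but is not needed. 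I expect the real care to be needed not in this estimate but in the bookkeeping of the first paragraph: the clean multiplicity $p(n-r)$ with $r=n-p$ rests on $\bA$ having full column rank so that $\nul(\bR)=\mathrm{range}(\hat{\bX})$ exactly — without it $\nul(\bR)$ is strictly larger and the extra directions contribute additional eigenvalues equal to the $\lambda_i$, so the full-rank hypothesis is exactly what makes the stated conclusion hold.
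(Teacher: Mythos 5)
Your proof is correct and follows the same core route as the paper: diagonalize $\hat{\bM}$ and the projected Gram matrix $\bR = \bP_\perp \bA^{\top}\bA\,\bP_\perp$, act with $\bJ(\hat{\bX})$ on the Kronecker products of the two eigenbases, and invoke a second-order optimality condition for the sign of $\lambda_i+\mu_j$. Two of your deviations are worth recording. First, your insistence on the full-column-rank hypothesis for $\bA$ is not pedantry: the paper's proof asserts that every eigenvector $\bw_j$ in $\nul(\bR)$ satisfies $\bP_\perp\bw_j=\bzeros$, which holds only when $\bA^{\top}\bA$ is positive definite on $\mathrm{range}(\bP_\perp)$, i.e.\ when $\nul(\bR)=\mathrm{range}(\hat{\bX})$ and $r=n-p$. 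Without that, vectors in $\nul(\bR)\cap\mathrm{range}(\bP_\perp)$ give $\bJ(\hat{\bX})(\mathbf{u}_i\otimes\bw)= \lambda_i(\mathbf{u}_i\otimes\bw)$, so the stated zero multiplicity $p(n-r)$ fails for a generic symmetric $\hat{\bM}$; your added assumption (natural in the unbalanced Procrustes setting) is exactly what makes the statement as written true, and you correctly flag that the non-negativity claim likewise cannot hold for arbitrary $\hat{\bM}$ but needs the optimality of $\hat{\bX}$. Second, for the sign you give a concrete chain, $\mu_j=\|\bA\mathbf{v}_j\|^2\ge\lambda_{\min}(\bA^{\top}\bA)$ together with the $j=n$ instance of the Eld\'en--Park condition (\ref{EldenParkProc}), whereas the paper simply appends the quadratic-form second-order condition from \cite{BIRTEA2020102868}; your version is more explicit and ties the bound directly to the quantities in the lemma, at the modest cost of invoking the eigenvalue ordering of $\bA^{\top}\bA$.
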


\begin{proof}
Let $\mathbf{P}_\perp = \mathbf{I}_n - \hat{\mathbf{X}}\hat{\mathbf{X}}^\top$, which is the orthogonal projection onto the complement of the column space of $\hat{\mathbf{X}}$. Since $\hat{\mathbf{X}}$ has orthonormal columns, $\mathbf{P}_\perp$ is symmetric and idempotent ($\mathbf{P}_\perp^2 = \mathbf{P}_\perp$), with eigenvalues 1 (multiplicity $n - p$) and 0 (multiplicity $p$). 
   Since $\hat{\mathbf{M}}$ is symmetric, it has an orthonormal eigenbasis $\{\mathbf{v}_i\}_{i=1}^p$ with eigenvalues $\{\lambda_i\}_{i=1}^p$:
   \[
   \hat{\mathbf{M}} = \sum_{i=1}^p \lambda_i \mathbf{v}_i \mathbf{v}_i^\top.
   \]

   The matrix $\mathbf{P}_\perp \mathbf{A}^\top \mathbf{A} \mathbf{P}_\perp$ is symmetric and positive semidefinite. Let its non-zero eigenvalues be $\{\mu_j\}_{j=1}^r$ with corresponding orthonormal eigenvectors $\{\mathbf{w}_j\}_{j=1}^r$ in the range of $\mathbf{P}_\perp$. The remaining $n - r$ eigenvectors $\{\mathbf{w}_j\}_{j=r+1}^n$ span the null space (including the null space of $\mathbf{P}_\perp$).

We now analyze the action of $\mathbf{J}(\hat{\mathbf{X}})$ on vectors of the form $\mathbf{v}_i \otimes \mathbf{w}_j$.

First consider the case that $\mathbf{w}_j$ is in the null space of $\mathbf{P}_\perp \mathbf{A}^\top \mathbf{A} \mathbf{P}_\perp$ ($j > r$) then
 $\mathbf{P}_\perp \mathbf{w}_j = \mathbf{0}$ and thus
\[
\mathbf{J}(\hat{\mathbf{X}}) (\mathbf{v}_i \otimes \mathbf{w}_j) = (\mathbf{I}_p \otimes \mathbf{P}_\perp \mathbf{A}^\top \mathbf{A} \mathbf{P}_\perp)(\mathbf{v}_i \otimes \mathbf{w}_j) + (\hat{\mathbf{M}} \otimes \mathbf{P}_\perp)(\mathbf{v}_i \otimes \mathbf{w}_j) = \mathbf{0} + \mathbf{0} = \mathbf{0}.
\]
This contributes $p \times (n - r)$ zero eigenvalues (since $j = r+1, \dots, n$).

Now consider  $\mathbf{w}_j$ in the range of $\mathbf{P}_\perp \mathbf{A}^\top \mathbf{A} \mathbf{P}_\perp$ ($j \leq r$).
Since $\mathbf{w}_j$ is an eigenvector of $\mathbf{P}_\perp \mathbf{A}^\top \mathbf{A} \mathbf{P}_\perp$ with eigenvalue $\mu_j$ and $\mathbf{P}_\perp \mathbf{w}_j = \mathbf{w}_j$ (because $\mathbf{w}_j$ is in the range of $\mathbf{P}_\perp$) we have
\[
\mathbf{J}(\hat{\mathbf{X}}) (\mathbf{v}_i \otimes \mathbf{w}_j) = (\mathbf{I}_p \mathbf{v}_i) \otimes (\mathbf{P}_\perp \mathbf{A}^\top \mathbf{A} \mathbf{P}_\perp \mathbf{w}_j) + (\hat{\mathbf{M}} \mathbf{v}_i) \otimes (\mathbf{P}_\perp \mathbf{w}_j) = \mathbf{v}_i \otimes (\mu_j \mathbf{w}_j) + (\lambda_i \mathbf{v}_i) \otimes \mathbf{w}_j = (\lambda_i + \mu_j)(\mathbf{v}_i \otimes \mathbf{w}_j).
\]
This gives eigenvalues $\lambda_i + \mu_j$ for $i = 1, \dots, p$ and $j = 1, \dots, r$.

The null space of $\mathbf{P}_\perp \mathbf{A}^\top \mathbf{A} \mathbf{P}_\perp$ has dimension $n - r$. For each $\mathbf{v}_i$, there are $n - r$ independent $\mathbf{w}_j$ in this null space, leading to $p \times (n - r)$ zero eigenvalues. 

Finally, we have from the second-order necessary conditions for a local minima, see \cite{BIRTEA2020102868} that 
$$
\langle \bA^{\top} \bA \mathbf{P}_\perp \, \Delta \bZ , \mathbf{P}_\perp \, \Delta \bZ \rangle - 
 \langle \mathbf{P}_\perp \, \Delta \bZ \hat{\bX}^{\top}\hat{\bG},  \mathbf{P}_\perp \, \Delta \bZ\rangle \geq \bzeros.
$$
\end{proof}

\section{The symplectic Euler}

The simplest, and most often adequate, method for solving the dynamical systems we have introduced is the symplectic Euler method, see, e.g., \cite{symp}. We will use this method both for the projected gradient and when using a Lagrangian formulation. 
The purpose of our numerical experiments is not to find the most efficient implementation but to show some interesting aspects of our approach.
The derivation of optimal parameters is left for further research. 

As an introduction to our use of symplectic Euler consider 
\begin{equation}
\ddot{\bX} + \eta \dot{\bX} =    - \bPsi
\end{equation}
where $\bPsi$ is different depending on the actual application. First, the system is formulated as a first-order system
\begin{equation}
\begin{array}{l}
\dot{\bX} = \bV\\
\dot{\bV} = - \eta \bV - \bPsi
\end{array}
\end{equation}
and discretized in time using a time step $h$ as
\begin{equation}
\label{SymplEulerPsi}
\begin{array}{l}
\bV_{k+1} = (1-\eta h)\bV_k - h \bPsi (\bX_k,\bV_k)\\
\bX_{k+1} = \bX_k + h \bV_{k+1}.
\end{array}
\end{equation}

\subsection{Damped Constraints}

Looking at~(\ref{GeneralDamp1})--(\ref{GeneralDamp3}) and applying the symplectic Euler
we get the general discrete system
\begin{equation}
\begin{array}{l}
\bT_k = \bK\odot ( \bX_k ^{\top} \bX_k - \bI)- \bG_k ^{\top} \bX_k  +2\bV_k^{\top}\bV_k- \bX_k^{\top} \bG_k\\
\text{Solve } \bX_k^{\top} \bX_k \bM + \bM \bX_k^{\top}\bX_k =  \bT_k \text{ for }  \bM_{k}\\
\bV_{k+1} = \bV_k - h \left(\bG_k + \bX_k \bM_k+ \eta \bV_k \right), \\
\bX_{k+1} = \bX_k + h \bV_{k+1}.
\end{array}
\end{equation}
In particular, for the  linear eigenvalue problem we have
\begin{equation}
\begin{array}{l}
\bG_k = \bA  \bX_k \\
\bT_k = \bK\odot ( \bX_k ^{\top} \bX_k - \bI)- 2\bX_k^{\top} \bA \bX_k  +2\bV_k^{\top}\bV_k\\
\text{Solve } \bX_k^{\top} \bX_k \bM + \bM \bX_k^{\top}\bX_k =  \bT_k \text{ for }  \bM_{k}\\
\bV_{k+1} = \bV_k - h \left(\bA \bX_k + \bX_k \bM_k + \eta \bV_k \right), \\
\bX_{k+1} = \bX_k + h \bV_{k+1},
\end{array}
\end{equation}
and for the Procrustes problem 
\begin{equation}
\begin{array}{l}
\bG_k = \bA^{\top}(\bA\bX_k-\bB)\\
\bT_k = \bK\odot ( \bX_k ^{\top} \bX_k - \bI)- \bG_k ^{\top} \bX_k  +2\bV_k^{\top}\bV_k- \bX_k^{\top} \bG_k\\
\text{Solve } \bX_k^{\top} \bX_k \bM + \bM \bX_k^{\top}\bX_k =  \bT_k \text{ for }  \bM_{k}\\
\bV_{k+1} = \bV_k - h \left(\bG_k + \bX_k \bM_k + \eta \bV_k \right), \\
\bX_{k+1} = \bX_k + h \bV_{k+1}.
\end{array}
\end{equation}

\subsection{Projected Gradient}

Consider (\ref{DynProj}) used for solving the eigenvalue problem.
Since we need to be on the manifold we find, in every iteration, the orthonormal matrix closest to $\bX_k$ in Frobenius norm we solve 
$$
\argmin_{\bX\in {\rm St}(n,p)} \| \bX - \bX_{k} \|_F.
$$
 The solution to this problem is, see \cite{GolubVanLoan},
$$
  \bX = \bU_{\bX} \bV_{\bX}^{\top}
$$
where $\bU_{\bX}, \bV_{\bX}$ are the orthonormal matrices in the thin SVD of $\bX_k$. 
Furthermore,  for the eigenvalue problem we can use Lemma
\ref{ProjPropLemma} to simplify the projection of the gradient
giving the discrete system
\begin{equation}
\begin{array}{l}
%\bX_{k} = \text{orth(}\bX_k)\\
\bX_{k} = \argmin_{\bX\in {\rm St}(n,p)} \| \bX - \bX_{k} \|_F\\\
\bG_k = \bA  \bX_k\\
\bV_{k+1} = \bV_k - h \left( ( \bI - \bX_k \bX_k ^{\top} ) \bG_k + \eta \bV_k \right), \\
\bX_{k+1} = \bX_k + h \bV_{k+1}.
\end{array}
\end{equation}
Note that $\bV_k$ will stay in the tangent space if we use the zero matrix as starting value.

\begin{figure}[h]
\begin{center}
\hspace{-1cm}
\includegraphics[height=10cm]{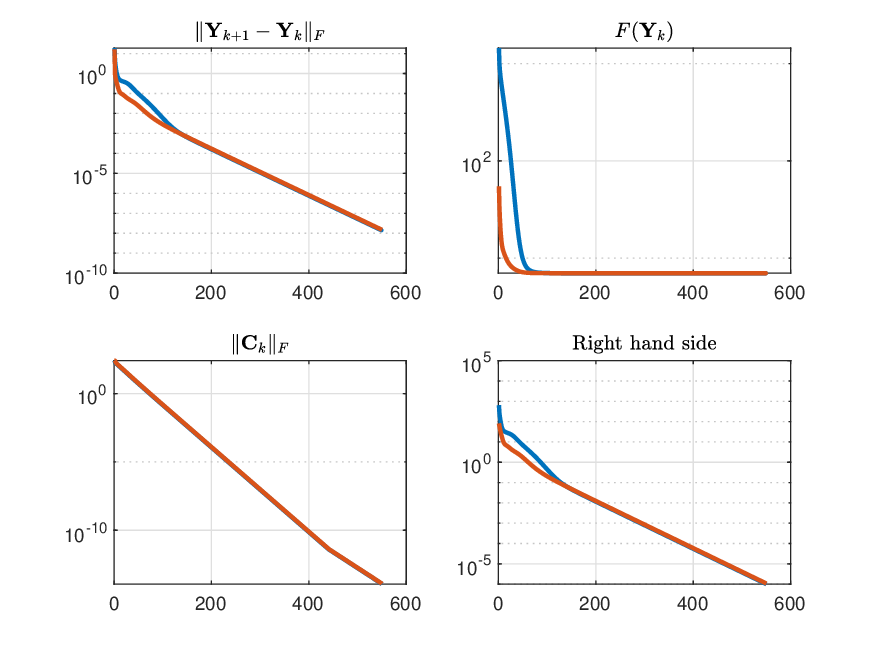}
\end{center}
\caption{Eigenvalue problem. Convergence of $\bY_k$ (see main text) vs iteration steps $k$.
Blue curves are for the Lagrange approach and red for the projection. 'Right-hand-side' is referring to the norm of the right-hand-side in the second-order damped system, respectively. The problem size was $n=100, p=10$ and the initial $\bY_{k=0}$ was chosen as a random matrix with elements uniformly distributed in $\left[-100,100 \right]$.}
\label{fig:EFeig}
\end{figure}

\begin{figure}[ht]
\begin{center}
\hspace{-1cm}
\includegraphics[height=10cm]{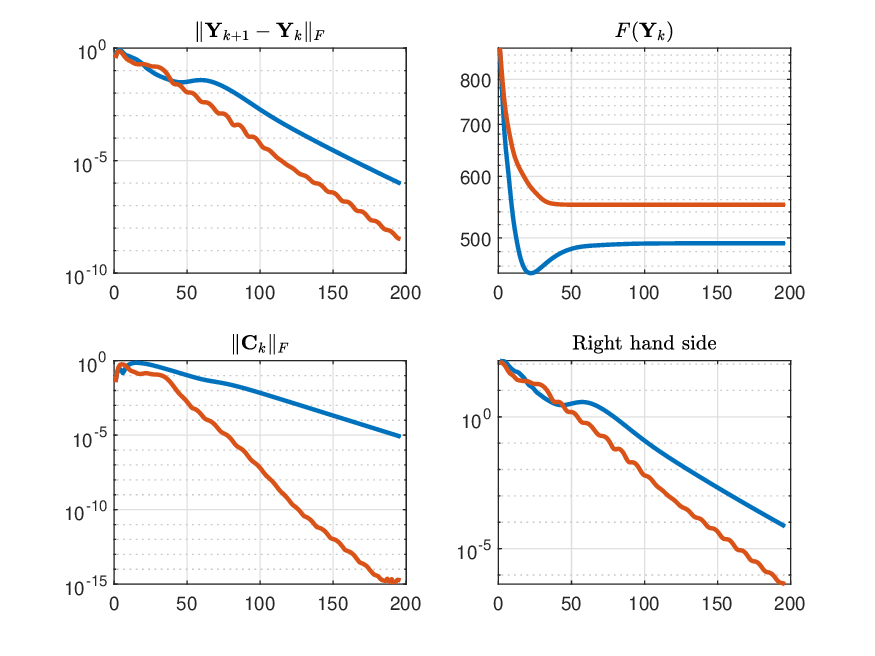}
\end{center}
\caption{Procrustes problem. Convergence of $\bY_k$ (see main text) vs iteration steps $k$. 
The parameters and initial conditions are the same as in Figure~\ref{fig:EFeig}.
Note in the upper-right subfigure that the Lagrange approach finds a lower local minimum.}
\label{fig:EFProc}
\end{figure}

Now consider the Procrustes problem. Here, the projection is given by (\ref{GenProjGrad}) and the discrete system is
\begin{equation}
\begin{array}{l}
\bX_{k} = \argmin_{\bX\in {\rm St}(n,p)} \| \bX - \bX_{k} \|_F\\
\bG_k = \bA^{\top}(\bA \bX_k - \bB)\\
\PROJ (\bG_k) = ( \bI - \bX_k \bX_k ^{\top} )\bG_k + \dfrac{1}{2}\bX_k (\bX_k ^{\top}\bG_k - \bG_k ^{\top}\bX_k)\\
\bV_{k+1} = \bV_k - h \left( \PROJ (\bG_k) + \eta \bV_k \right), \\
\bX_{k+1} = \bX_k + h \bV_{k+1}. \\
\end{array}
\end{equation}

\begin{figure}[ht]
\begin{center}
\hspace{-1cm}
\includegraphics[height=6cm]{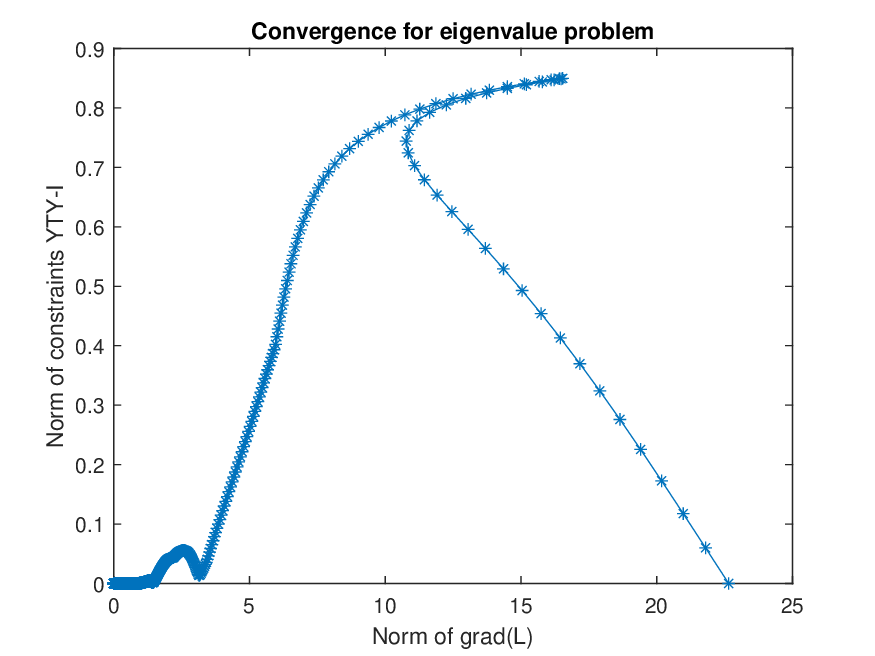}
\end{center}
\caption{Eigenvalue problem. Convergence of $\| \bC \|_F$ vs $\| \bA \bX + \bX \bM \|_F$.}
\label{fig:EigFC1}
\end{figure}

\begin{figure}[ht]
\begin{center}
\hspace{-1cm}
\includegraphics[height=6cm]{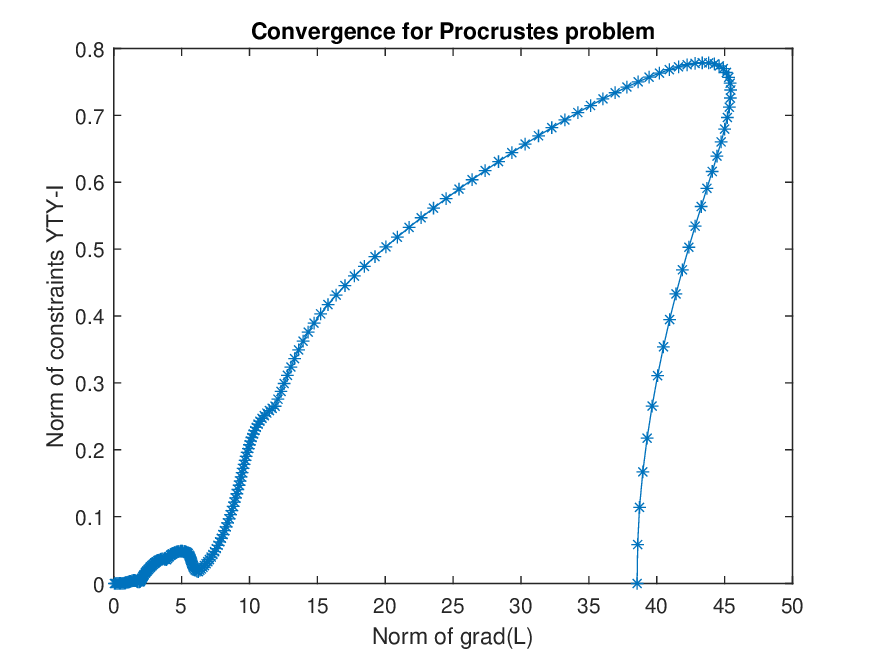}
\end{center}
\caption{Procrustes problem. Convergence of $\| \bC \|_F$ vs $\| \bA \bX + \bX \bM \|_F$.}
\label{fig:EigFC2}
\end{figure}

\section{Numerical examples}

We have divided the numerical examples in three different parts depending on what we want to illustrate. 

Firstly, we show the actual convergence behaviour of  the two approaches with regard to the most relevant parts in the dynamical system, i.e., the solution, the objective function, the constraints, and the gradient of the Lagrange function or the projected gradient.

Secondly, for the Lagrange approach we plot the norm of the constraints and the norm of the Lagrange function in order to illustrate how they depend on each other. 

Finally, we look at a  two-dimensional eigenvalue problem to show how the solution vectors converge to the optimal eigenvector  depending on the eigenvalues of the matrix. Here we focus on the parameter $\nu$ in the damped dynamical system for the constraints.

\subsection{Convergence}

In Figures \ref{fig:EFeig} and \ref{fig:EFProc} we show the convergence of the two approaches, see captions. The step size $h$, damping $\eta$, and the parameter $\nu$ have been chosen in order to get convergence, but not necessarily optimal convergence. From these numerical experiments it is not possible to conclude what approach converges faster, and the conjecture is that it is problem dependent. An interesting feature for the Procrustes problem is that the Lagrange approach seems to almost always find a better (i.e., lower $F$) minimum, but with slower convergence.

\subsection{The two dynamical systems}

Using the Lagrange approach for the two problems, we show how the norm of the constraints and the norm of the projected Lagrangian converge, see Figures \ref{fig:EigFC1} and \ref{fig:EigFC2}. We can notice that there is no global monotonic behaviour in either of these norms especially in the beginning of the iterates. However, close to the solution the norms both decrease monotonically.

\subsection{The linear two-dimensional Eigenvalue Problem}

Let $n=2,p=1$ and assume that $\bA = \text{diag}(\lambda,1), \lambda<1$ giving $\hat{\bX}= \be_1, \hat{\bM}=-\lambda = \mu_1$. Since $p=1$ the matrix $\bK$ will be scalar $\bK=\nu$ and $\bX$ a vector in the plane. Below we show the convergence of $\bX(t_k),k=1, \ldots$ for different $\lambda, \nu$, and $\bX_0$. We have chosen $\bX_0 =  0.8[1+ \cos (\theta),\ \sin(\theta)]^{\top}, \theta = 0, \pi/4, \pi/2, \ldots, 7\pi/4$ and $\bV_0$ has random elements uniformly distributed in $[-0.5,0.5]^2$. 
The time step and damping are  $h=0.1$ and $\eta = 5$, respectively, in all these plots.

Notice that for the projected approach we stay on the manifold (green curves). We include the corresponding points in the plots only as reference with the below comments only for the Lagrange approach.

Furthermore, for this simple problem it's easy to see that the solution of the constraint equations will be
\[
c(t) = e^{-\frac{\eta}{2} t} \left[ \alpha  \cos(\omega t) + \beta \sin(\omega t) \right],
\omega = \frac{\sqrt{4\nu - \eta^2}}{2}
\]
for constants $\alpha, \beta$ depending on the initial values but there is no closed solution for $\bX(t)$. 

Consider Figure~\ref{fig:EigFig1} with $\nu =10$ so that the constraint equation is overdamped ($4\nu - \eta^2>0$).
It is clearly seen that convergence towards the manifold is fast and independent of the size of the smallest eigenvalue.

\begin{figure}[ht]
\begin{center}
\hspace{-1cm}
\includegraphics[height=6cm]{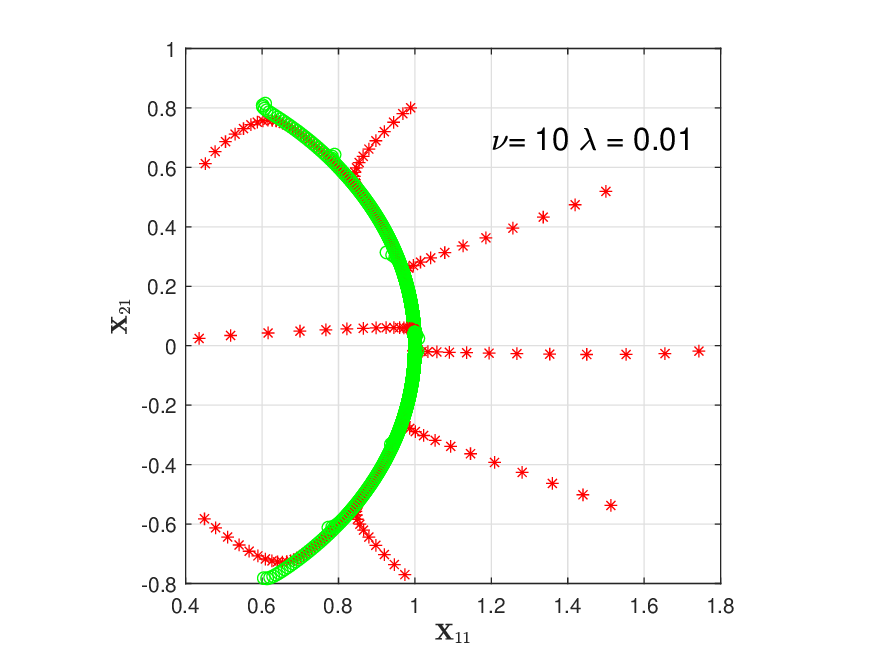}
\includegraphics[height=6cm]{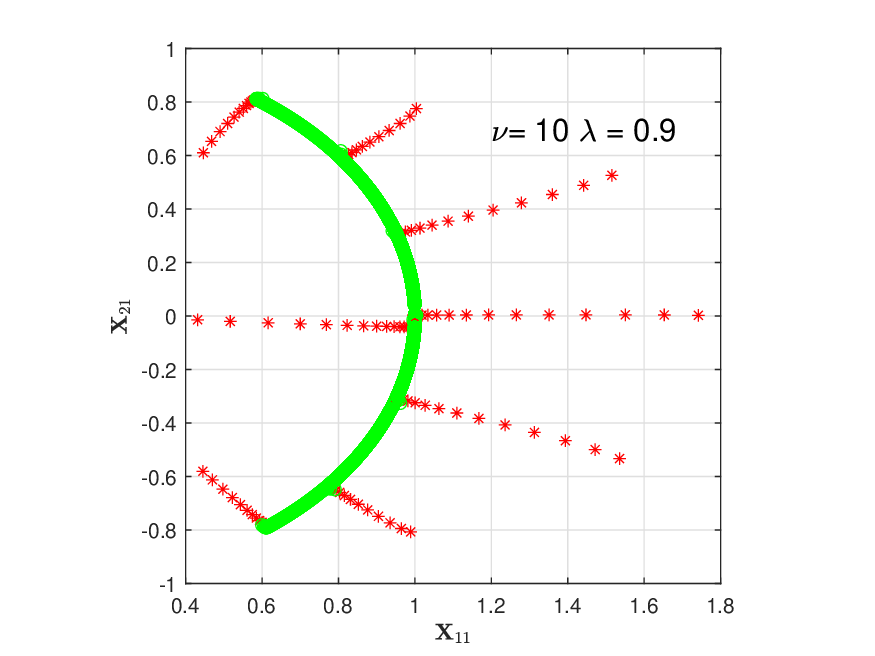}
\end{center}
\caption{A large $\nu$ gives fast convergence towards the manifold.}
\label{fig:EigFig1}
\end{figure}

With a small $\nu=0.1$ and eigenvalue $\lambda = 0.01$ as in Figure \ref{fig:EigFig3} we observe faster convergence to the second component in the corresponding eigenvector. 
This is an underdamped case of the constraint equation.

\begin{figure}[ht]
\begin{center}
\hspace{-1cm}
\includegraphics[height=6cm]{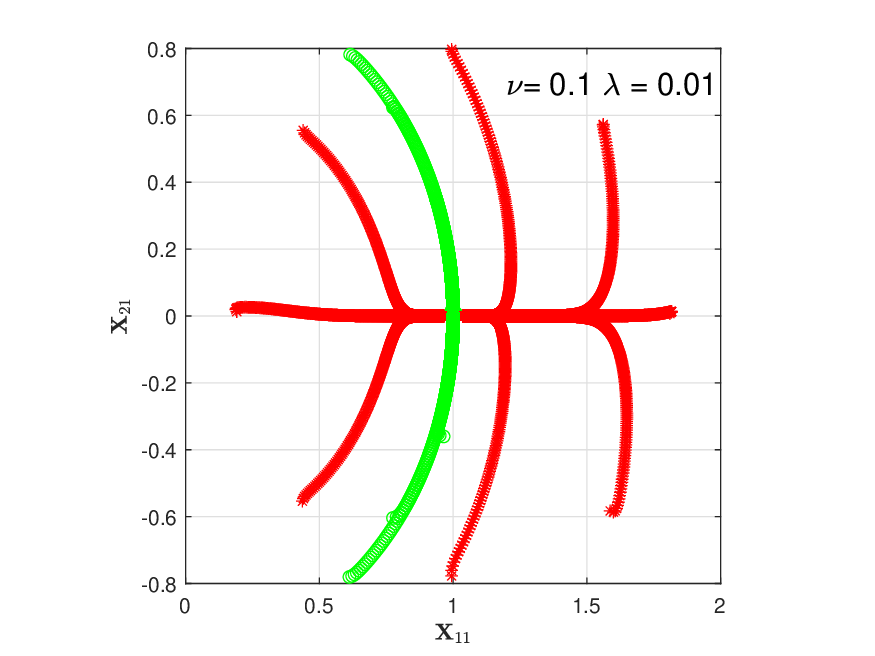}
\end{center}
\caption{A small $\nu$ and a small $\lambda$ give convergence towards $\bX_{21}=0$.}
\label{fig:EigFig3}
\end{figure}

Finally, for a small $\nu$ and a large $\lambda$, again underdamped, the trajectories do not seem to converge via any subspace. This is an interesting case since one possible advantage of the Lagrange approach is to be able to have trajectories taking 'shortcuts' outside the manifold given by the constraints.

\begin{figure}[ht]
\begin{center}
\hspace{-1cm}
\includegraphics[height=6cm]{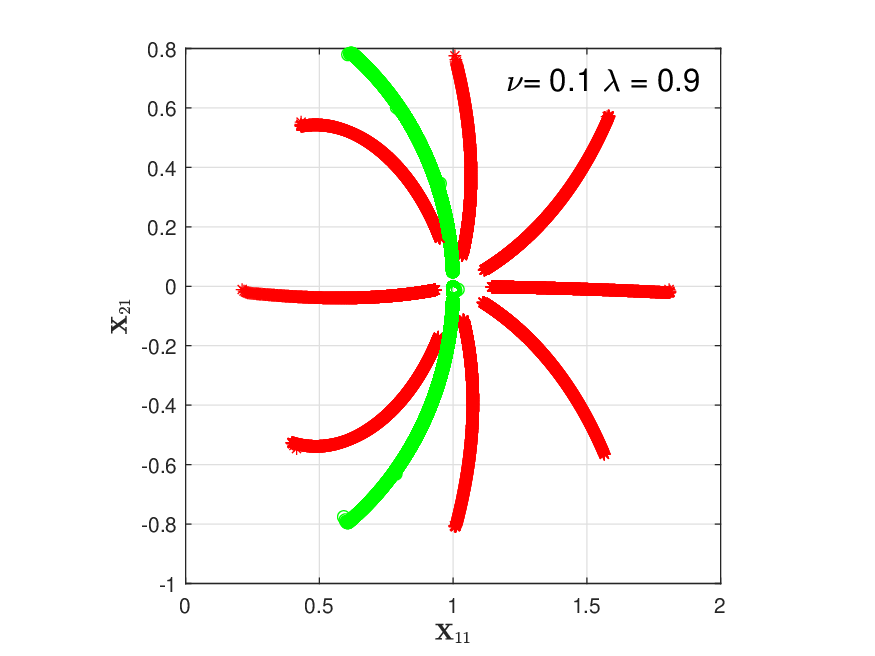}
\end{center}
\caption{A small $\nu$ and a large $\lambda$ does not seem to have an initial convergence to a subpsace.}
\label{fig:EigFig4}
\end{figure}

\section{Summary and Outlook}

We have developed and analyzed two approaches for solving optimization problems on the Stiefel manifold using damped dynamical systems: the Lagrange formulation and the projected gradient method. Both methods demonstrate asymptotic stability and convergence to local minima, confirmed by numerical experiments for eigenvalues and the Procrustes problems.

We have not discussed in detail the choice of (optimal)  damping parameters $\eta$, time steps $h$ and the matrix $\bK$. For very special problem settings that might be possible but generally it's more reasonable to aim for some kind of iterative approach choosing parameters iteratively. Moreover, experience has shown that different parameters may be used to find more interesting local minima~\cite{Ogren2025}.

While we used the symplectic Euler method, developing geometric integrators that preserve the manifold structure and energy properties could improve numerical stability and efficiency.

These methods can be applied to more general Riemannian manifolds beyond the Stiefel case that could lead to unified frameworks for constrained optimization.

\end{document}